\documentclass[12pt,reqno]{amsart}

\setlength{\textheight}{23cm}
\setlength{\textwidth}{16cm}
\setlength{\topmargin}{-0.8cm}
\setlength{\parskip}{0.3\baselineskip}
\hoffset=-1.4cm

\usepackage{amssymb}

\newtheorem{theorem}{Theorem}[section]
\newtheorem{lemma}[theorem]{Lemma}
\newtheorem{proposition}[theorem]{Proposition}
\newtheorem{corollary}[theorem]{Corollary}

\theoremstyle{remark}
\newtheorem{remark}[theorem]{Remark}

\numberwithin{equation}{section}

\baselineskip=15.5pt

\begin{document}

\title[Monodromy map from differential systems to character variety]{The monodromy map
from differential systems to character variety is generically immersive}

\author[I. Biswas]{Indranil Biswas}

\address{School of Mathematics, Tata Institute of Fundamental Research,
Homi Bhabha Road, Mumbai 400005, India}

\email{indranil@math.tifr.res.in}

\author[S. Dumitrescu]{Sorin Dumitrescu}

\address{Universit\'e C\^ote d'Azur, CNRS, LJAD, France}

\email{dumitres@unice.fr}

\subjclass[2010]{34M03, 34M56, 14H15, 53A55}

\keywords{Local system, character variety, holomorphic connection, monodromy map}

\date{}

\begin{abstract}
Let $G$ be a connected reductive affine algebraic group defined over $\mathbb C$ and $\mathfrak g$ 
its Lie algebra. We consider all pairs of the form $(Y,\, D)$, where $Y$ is a complex structure on a compact
oriented $C^\infty$ surface $\Sigma$, and $D$ is a holomorphic connection on the trivial holomorphic principal $G$--bundle
$Y\times G$ on $Y$; these are known as $\mathfrak g$--differential systems.
We study the monodromy map from the space of $\mathfrak g$--differential systems 
to the character variety of $G$--representations of the fundamental group of $\Sigma$. If the complex dimension
of $G$ is at least 
three, and $\text{genus}(\Sigma)\, \geq \, 2$, we show that the monodromy map is an immersion at the generic point.
\end{abstract}

\maketitle

\tableofcontents

\section{Introduction}\label{sec0}

Our aim here is to study the monodromy map, also called the Riemann--Hilbert map, from the
differential systems over compact Riemann surfaces to the character varieties. In order to 
describe the framework, let us denote by $\Sigma$ a given compact connected oriented 
$C^\infty$ surface of genus $g \,\geq\, 2$ and by $G$ a connected reductive affine 
algebraic group defined over $\mathbb C$.

Consider a complex structure $X$ on $\Sigma$ (it gives an element in the Teichm\"uller space for
$\Sigma$) and a holomorphic (flat) connection $\phi$ on the trivial holomorphic principal 
$G$--bundle $X \times G$ over $X$. Recall that $\phi$ is determined by an element $\delta\, \in\, 
H^0(X,\, K_X)\otimes{\mathfrak g},$ where $\mathfrak g$ is the (complex) Lie algebra of 
$G$ and $K_X$ is the canonical line bundle of $X$.
Fixing a base point $x_0\, \in\, X$, consider the corresponding
universal cover $ \pi \,:\, \widetilde{X} \,\longrightarrow\, X$ of $X$, and
endow the trivial principal $G$--bundle $\widetilde{X}\times G$ over $\widetilde X$
with the pulled back holomorphic flat connection $\pi^*\phi$.

For any locally defined parallel section $s$ of $X \times G$ for the connection $\phi$, 
the pulled back local section $\pi^*s$ of $\widetilde{X}\times G$ extends to a 
$\pi^*\phi$--parallel section over entire $\widetilde X$. This extension of $\pi^*s$ 
produces a holomorphic map $\widetilde{X} \,\longrightarrow\, G$ which is $\pi_1(X,\, 
x_0)$--equivariant with respect to the natural action of $\pi_1(X,\, x_0)$ on $\widetilde 
X$ through deck transformations and the action of $\pi_1(X,\, x_0)$ on $G$ through a group 
homomorphism $\pi_1(X,\, x_0) \,\longrightarrow\, G$; this homomorphism $\pi_1(X,\, x_0) 
\,\longrightarrow\, G$ is known as the {\it monodromy } of the flat connection $\phi$.

Although the above mentioned monodromy homomorphism depends on the choice of the 
holomorphic trivialization of the principal $G$--bundle, the element of the character 
variety of $G$-representations $$\Xi \,:=\, \text{Hom}(\pi_1(X),\, G)/\!\!/G$$ given by it 
is independent of both the trivialization of the principal $G$--bundle and the base point 
$x_0$ (and also of the choice of $s$). It should be clarified that the monodromy map is 
defined from the isomorphism classes of flat principal $G$--bundles on $X$ to $\Xi$; to 
define this map the underlying holomorphic principal $G$--bundle is not needed to be 
trivial.

Recall that $\Xi$ is a (singular) complex analytic space of dimension $$2((g-1)\cdot\dim\, 
[G,\, G] + g\cdot (\dim G - \dim\, [G,\, G]));$$ see, for example, \cite{Go}, 
\cite[Proposition 49]{Si}. We shall denote the dimension of the commutator group $\dim 
[G,\, G]$ by $d$ and $\dim G - \dim\, [G,\, G]$ by $c$ (it is the dimension of the center 
of $G$). With this notation, the complex dimension of $\Xi$ is $2(g-1)d+2gc$.

Let us adopt the notation of \cite{CDHL} and denote by $\text{Syst}$ the space of all pairs $(X,\, 
\phi)$, where $X$ is an element of the Teichm\"uller space for $\Sigma$ and $\phi$ is a holomorphic 
connection on the trivial principal $G$--bundle $X \times G$ over $X$; recall that a holomorphic connection on 
a holomorphic bundle over a Riemann surface is automatically flat. This space of all differential
systems on $\Sigma$ is a complex space of dimension $(g-1)(d+3)+gc$.

Sending a holomorphic connection to its monodromy representation, 
a holomorphic mapping
$$\widetilde{\text{Mon}}\, :\, \text{Syst}\, \longrightarrow\, \Xi$$
is obtained; in other words, $\widetilde{\text{Mon}}$ is the restriction to $\text{Syst}$
of the Riemann--Hilbert map.

Let us define the nonempty Zariski open subset $\text{Syst}^{\text{irred}}$ of $\text{Syst}$ 
consisting of all pairs $(X,\, \phi)$ for which the connection $\phi$ is {\it irreducible}, meaning 
the monodromy homomorphism for $\phi$ does not factor through any proper parabolic subgroup of $G$.
It is a connected complex orbifold of dimension $(g-1)(d+3)+gc$ (see Lemma \ref{lemsmm}).
The image of $\text{Syst}^{\text irred}$ under the above map $\widetilde{\text{Mon}}$,
that sends a holomorphic connection to the corresponding monodromy representation, lies in the 
Zariski open subset $$\Xi^{\text irred}\, \subset\, \Xi$$ defined by the irreducible 
homomorphisms $\pi_1(X)\, \longrightarrow\, G$ (i.e., the homomorphisms that do not factor through some 
proper parabolic subgroup of $G$). Although the complex space $\Xi^{\text irred}$ is not smooth in general,
its singularities are finite group quotients. Let
\begin{equation}\label{mm}
\text{Mon} \,:\, \text{Syst}^{\text{irred}} \,\longrightarrow\, \Xi^{\text irred}
\end{equation}
be the holomorphic map between complex manifolds given by 
the restriction of $\widetilde{\text{Mon}}$ to $\text{Syst}^{\text{irred}}\, \subset\,
\text{Syst}$.

The main result proved here is the following (see Theorem \ref{thm2}): 

\begin{theorem}\label{thm principal}
If the complex dimension of $G$ is at least three, the monodromy map
$${\rm Mon} \,:\, {\rm Syst}^{\rm irred} \, \longrightarrow\,
\Xi^{\rm irred}$$ in \eqref{mm} is an immersion at the generic point.
\end{theorem}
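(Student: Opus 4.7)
My plan is to compute the differential $d\mathrm{Mon}$ at an arbitrary point $(X, \phi) \in \mathrm{Syst}^{\mathrm{irred}}$, reduce its injectivity to a surjectivity statement for a natural multiplication map on spaces of holomorphic canonical sections, and verify this surjectivity for a generic choice of $(X, \phi)$.

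\noindent\textbf{Tangent spaces and the differential.} Under the de Rham isomorphism, the target tangent space $T_\rho \Xi^{\mathrm{irred}}$ is canonically isomorphic to $H^1_{dR}(X, (\mathfrak{g}, \nabla))$, the de Rham cohomology of the trivial adjoint bundle equipped with the flat connection $\nabla := d + \mathrm{ad}(\phi)$, computed via smooth $\mathfrak{g}$-valued forms. The source tangent space fits into a short exact sequence
\[
0 \longrightarrow \bigl(\mathfrak{g} \otimes H^0(X, K_X)\bigr)/[\phi, \mathfrak{g}] \longrightarrow T_{(X,\phi)}\mathrm{Syst} \longrightarrow H^1(X, T_X) \longrightarrow 0,
\]
whose left term parametrizes connection variations modulo the infinitesimal $G/Z(G)$-gauge action on the trivial bundle. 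For a representative $(\mu, \eta)$ with Beltrami class $\mu \in H^1(X, T_X)$ and $\eta \in \mathfrak{g} \otimes H^0(X, K_X)$, a Beltrami-coordinate computation of how the smooth representative of a holomorphic $(1,0)$-form deforms under a complex-structure change yields
\[
d\mathrm{Mon}(\mu, \eta) = [\eta + \iota_\mu \phi] \in H^1_{dR}(X, (\mathfrak{g}, \nabla)),
\]
where $\iota_\mu \phi \in \Omega^{0,1}(X, \mathfrak{g})$ denotes the natural contraction of the $(1,0)$-form $\phi$ by the $(-1,1)$-tensor $\mu$.

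\noindent\textbf{Cohomological characterization of the kernel.} The pair $(\mu, \eta)$ lies in $\ker d\mathrm{Mon}$ iff $\eta + \iota_\mu \phi = \nabla s$ for some smooth function $s \colon X \to \mathfrak{g}$; splitting by Hodge type, this becomes $\bar\partial s = \iota_\mu \phi$ and $\partial s + [\phi, s] = \eta$. If $\mu = 0$, then $s$ is holomorphic, hence constant, and $\eta \in [\phi, \mathfrak{g}]$, which vanishes in the fiber of $\mathrm{Syst}$. If $\mu \neq 0$, solvability of the $\bar\partial$-equation forces $[\iota_\mu \phi] = 0$ in $H^{0,1}(X, \mathfrak{g}) \cong H^1(X, \mathcal{O}_X) \otimes \mathfrak{g}$. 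Writing $\phi = \sum_i \omega_i \otimes e_i$ with $\{e_i\}$ a basis of $\mathfrak{g}$ and setting $W := \mathrm{span}\{\omega_i\} \subset H^0(X, K_X)$, Serre duality $H^1(X, T_X) \cong H^0(X, K_X^{\otimes 2})^*$ translates this vanishing into the statement that $\mu$ annihilates $W \cdot H^0(X, K_X) \subset H^0(X, K_X^{\otimes 2})$. Hence $d\mathrm{Mon}$ is injective at $(X, \phi)$ if and only if the multiplication map
\[
m_W\colon W \otimes H^0(X, K_X) \longrightarrow H^0(X, K_X^{\otimes 2})
\]
is surjective.

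\noindent\textbf{Generic surjectivity and the main obstacle.} It will suffice to exhibit one $(X, \phi) \in \mathrm{Syst}^{\mathrm{irred}}$ for which $m_W$ is surjective; upper semicontinuity of rank then propagates the injectivity of $d\mathrm{Mon}$ to a Zariski open subset. I would take $X$ non-hyperelliptic of genus $g \geq 3$, or any $X$ when $g = 2$; by Max Noether's theorem (respectively, a direct $g = 2$ verification) the full multiplication $H^0(X, K_X) \otimes H^0(X, K_X) \to H^0(X, K_X^{\otimes 2})$ is surjective. If $\dim G \geq g$, a generic $\phi$ satisfies $W = H^0(X, K_X)$ and we are done immediately. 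The main obstacle is the remaining case $\dim G < g$, where one must show that a generic $(\dim G)$-dimensional subspace $W \subset H^0(X, K_X)$ still satisfies $W \cdot H^0(X, K_X) = H^0(X, K_X^{\otimes 2})$; dualizing, this amounts to the vanishing of $\bigl(W^\perp \otimes H^0(X, K_X)^*\bigr) \cap H^0(X, K_X^{\otimes 2})^*$ inside $H^0(X, K_X)^* \otimes H^0(X, K_X)^*$, and the expected-codimension count becomes favorable precisely when $\dim W \geq 3$---which is exactly the hypothesis $\dim G \geq 3$. Intersecting with the Zariski open locus of irreducible connections then yields a nonempty open subset of $\mathrm{Syst}^{\mathrm{irred}}$ on which $d\mathrm{Mon}$ is injective, completing the argument.
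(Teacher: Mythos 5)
Your overall strategy coincides with the paper's: both reduce injectivity of $d\,\mathrm{Mon}$ (off the isomonodromy directions) to the surjectivity of the multiplication map $m_W\colon W\otimes H^0(X,K_X)\to H^0(X,K_X^{\otimes 2})$, where $W\subset H^0(X,K_X)$ is spanned by the components of $\phi$, and both observe that $\dim W$ can be taken to be $3$ because $\dim G\ge 3$. Your route to that reduction is a direct Dolbeault/Beltrami computation of the differential, $d\,\mathrm{Mon}(\mu,\eta)=[\eta+\iota_\mu\phi]$, whereas the paper works structurally with the Atiyah bundle, a two-term complex $\mathcal C_\bullet$, and the identification of $\ker d\,\mathrm{Mon}$ with $\beta(H^1(X,TX))$ (Theorem 2.1(4) and Proposition 3.1); these are equivalent in substance, and your kernel analysis (solvability of $\bar\partial s=\iota_\mu\phi$ forcing $[\iota_\mu\phi]=0$ in $H^1(X,\mathcal O_X)\otimes\mathfrak g$) matches the paper's condition that $\rho_*\colon H^1(X,TX)\to H^1(X,\mathrm{ad}(E_G))$ be injective.

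The genuine gap is in your final step. You justify the surjectivity of $m_W$ for a generic three-dimensional $W$ on a non-hyperelliptic curve by saying ``the expected-codimension count becomes favorable precisely when $\dim W\ge 3$.'' A favorable dimension count ($3g\ge 3g-3$) does not prove surjectivity of a multiplication map: the locus where a map of vector bundles drops rank can be everything, and multiplication maps of sections are notorious for this. The standard cautionary example is in this very problem: for a hyperelliptic curve of genus $g\ge 3$ the full map $H^0(K_X)^{\otimes 2}\to H^0(K_X^{\otimes 2})$ fails to be surjective even though $g^2\ge 3g-3$. To make the heuristic into a proof you must either exhibit a single three-dimensional $W$ for which $m_W$ is surjective or give a genuine geometric argument; this is exactly the nontrivial external input of the paper, namely Theorem 1.1 of Gieseker (\emph{Acta Math.} 168 (1992), p.~221), whose proof is attributed to Lazarsfeld and uses actual curve theory, not a parameter count. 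As written, your argument is circular at this point: genericity of $W$ is what you are trying to establish, so you cannot invoke it to conclude that the degeneracy locus has the expected codimension. A secondary, minor over-claim: you assert that injectivity of $d\,\mathrm{Mon}$ at $(X,\phi)$ is \emph{equivalent} to surjectivity of $m_W$, but your construction of a kernel element from a class $\mu$ with $[\iota_\mu\phi]=0$ is incomplete (one must check that the candidate $\eta=\partial s+[\phi,s]$ is holomorphic); only the implication you actually use (surjectivity of $m_W$ implies injectivity) is needed, and that direction is fine.
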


Remark \ref{remla} explains that the assumption $\dim G \, \geq\, 3$ in Theorem \ref{thm principal} is necessary.

If $G\,=\, {\rm SL}(2,{\mathbb C})$, then the dimensions of
$\text{Syst}^{\text irred} $ and $\Xi^{\text irred}$ are both $6g-6$,
and Theorem \ref{thm principal} implies that
$\text{Mon}$ in \eqref{mm} is a
local biholomorphism at the generic point (see Corollary \ref{cor3}).
It should be mentioned that examples constructed in \cite{CDHL} show that for
$G\,=\, {\rm SL}(2,{\mathbb C})$ and $\Sigma$ of genus $g \,\geq\, 3$, the monodromy map $\text{Mon}$ 
is not always a local biholomorphism (over entire $\text{Syst}^{\text{irred}}$).

When $G\,=\, {\rm SL}(2,{\mathbb C})$ and $g\,=\,2$, the main result of \cite{CDHL} says that the 
map $\text{Mon}$ in \eqref{mm} is a local biholomorphism over entire $\text{Syst}^{\text{irred}}$. 
An alternative proof of this result of \cite{CDHL} is given in Corollary \ref{cor2}. In this 
context it should be mentioned that our work was greatly influenced by \cite{CDHL}.

Just as for the authors of \cite{CDHL}, our main motivation came from a question of E. 
Ghys for $G\,=\, {\rm SL}(2,{\mathbb C})$ relating the monodromy of ${\mathfrak 
s}{\mathfrak l}(2, {\mathbb C})$--differential systems to the existence of holomorphic 
curves of genus $g \,>\,1$ lying in compact quotients of ${\rm SL}(2,{\mathbb C})$ by 
lattices $\Gamma$. Such compact quotients of ${\rm SL}(2,{\mathbb C})$ are non-K\"ahler 
manifolds. These non-K\"ahler manifolds do not admit any closed complex hypersurface \cite{HM}. 
It is known that elliptic curves do exist in some of those manifolds, but the existence of 
holomorphic curves of genus $g\,>\,1$ is still an open question. E. Ghys realized that 
constructing an irreducible ${\mathfrak s}{\mathfrak l}(2,{\mathbb C})$--differential 
system on a Riemann surface $X$ with monodromy lying inside a cocompact lattice $\Gamma\, 
\subset\, {\rm SL}(2,{\mathbb C})$ would provide a nontrivial holomorphic map from $X$ 
into the quotient of ${\rm SL}(2,{\mathbb C})/\Gamma$ (in fact the two problems are 
equivalent). While the question asked by Ghys is still open, the above Theorem \ref{thm 
principal} extends the results of \cite{CDHL} and leads to an enhancement of the 
understanding of the monodromy of the differential systems.

The strategy of the proof of Theorem \ref{thm principal} and the organization of the paper 
are as follows. We consider the monodromy map, to the character variety, defined on the 
space of triples $(X,\, E_G,\, \phi)$, with $X$ an element of the Teichm\"uller space for 
$\Sigma$, $E_G$ a holomorphic principal $G$-bundle over $X$ and $\phi$ a holomorphic 
connection on $E_G$. In Section \ref{section 2} we define a $2$-term complex ${\mathcal 
C}_\bullet$ over $X$ whose first hypercohomology gives the infinitesimal deformations of 
$(X,\, E_G, \,\phi)$ (see Theorem \ref{thm1} (2)). Moreover, the kernel of the differential of 
the monodromy map coincides with the image of the space of deformations of the complex 
structure $H^1(X,\, TX)$ through a certain homomorphism $\beta_\phi$ from $H^1(X,\, TX)$ 
to the $1$-hypercohomology of ${\mathcal C}_\bullet$ (see Theorem \ref{thm1} (4)). In Section 
\ref{section 3} we fix $E_G$ to be the holomorphically trivial principal $G$--bundle over 
$X$ and set $\phi$ to be an irreducible holomorphic connection on it. We show that the 
tangent space of $\text{Syst}^{\text irred}$ at $(X,\, \phi)$, which is naturally embedded 
in ${\mathbb H}^1({X, \mathcal C}_\bullet)$, is transverse to the kernel of the monodromy 
map, provided $\phi$ satisfies a geometric criterion described in Proposition \ref{prop1}. 
In Section \ref{section 4} we consider the special case of $G\,=\, {\rm SL}(2,{\mathbb 
C})$ and we prove that the criterion in Proposition \ref{prop1} is satisfied at any point 
$(X,\, \phi) \,\in\, \text{Syst}^{\text{irred}}$ for surfaces $\Sigma$ of genus two (see 
Proposition \ref{prop2}); the same holds for the generic point in 
$\text{Syst}^{\text{irred}}$ for surfaces $\Sigma$ of genus three (see Lemma \ref{lem1}). 
The main result (Theorem \ref{thm principal}) is obtained in Section \ref{section 5} where 
Lemma \ref{lem2} proves that the transversality criterion is satisfied at the generic 
point in $\text{Syst}^{\text{irred}}$. More precisely, the proof of Lemma \ref{lem2} shows 
that the transversality criterion (in Proposition \ref{prop1}) is implied by the statement 
that for a non-hyperelliptic Riemann surface $X$, and a generic three dimensional subspace 
$W\, \subset\, H^0(X,\, K_X)$, the natural homomorphism $$\Theta_W\, :\, H^0(X,\, 
K_X)\otimes W\, \longrightarrow\, H^0(X,\, K^2_X) $$ is surjective. The above statement is 
precisely the Theorem 1.1 in \cite[p.~221]{Gi}, where the proof of it is attributed to R. 
Lazarsfeld.

\section{Infinitesimal deformations of bundles and connections}\label{section 2} 

In this section we introduce several infinitesimal deformation spaces and natural morphisms between them.

The holomorphic tangent bundle of a complex manifold $Y$ will be denoted by $TY$.

Let $X$ be a compact connected Riemann surface. The holomorphic
cotangent bundle of $X$ will be denoted by $K_X$.
Let $G$ be a connected reductive affine algebraic group defined over $\mathbb C$.
The Lie algebra of $G$ will be denoted by $\mathfrak g$.

Take a holomorphic principal $G$--bundle over $X$
\begin{equation}\label{e1}
p\, :\, E_G\,\longrightarrow\, X\, .
\end{equation}
So $E_G$ is equipped with a holomorphic action of $G$ on the right which is both free
and transitive on the fibers of $p$, and furthermore, $E_G/G\,=\, X$. Consider
the holomorphic right action of $G$ on the holomorphic tangent bundle
$TE_G$ given by the action of $G$ on $E_G$. The quotient
$$
\text{At}(E_G)\,:=\, (TE_G)/G
$$
is a holomorphic vector bundle over $E_G/G\,=\, X$; it is called the
\textit{Atiyah bundle} for $E_G$. The differential $$dp\, :\,
TE_G\, \longrightarrow\, p^* TX$$ of the projection $p$ in \eqref{e1} is $G$--equivariant
for the trivial action of $G$ on the fibers of $p^*TX$. The action of $G$ on $E_G$ produces
a holomorphic homomorphism from the trivial holomorphic bundle
$$
E_G\times {\mathfrak g}\, \longrightarrow\, \text{kernel}(dp)
$$
which is an isomorphism. Therefore, we have a short
exact sequence of holomorphic vector bundles on $E_G$
\begin{equation}\label{e1p}
0\,\longrightarrow\, \text{kernel}(dp)\,=\,E_G\times{\mathfrak g}\,\longrightarrow
\,\text{At}(E_G)\, \stackrel{dp}{\longrightarrow}\, p^*TX\, \longrightarrow\, 0
\end{equation}
in which all the homomorphisms are $G$--equivariant. The quotient $\text{kernel}(dp)/G$ is the
adjoint vector bundle $\text{ad}(E_G)\,=\, E_G({\mathfrak g})$, which is the holomorphic
vector bundle over $X$ associated to $E_G$ for the adjoint action of $G$ on $\mathfrak g$. Taking
quotient of the bundles in \eqref{e1p}, by the actions of $G$, the following short exact sequence of
holomorphic vector bundles on $X$ is obtained:
\begin{equation}\label{e2}
0\, \longrightarrow\, \text{ad}(E_G)\,\stackrel{\iota}{\longrightarrow}\, \text{At}(E_G)\,
\stackrel{d'p}{\longrightarrow}\, TX \,\longrightarrow\, 0
\end{equation}
\cite{At}; it is known as the Atiyah exact sequence for $E_G$.

A holomorphic connection on $E_G$ is a holomorphic homomorphism of vector bundles
$$
\phi\, :\, TX\, \longrightarrow\, \text{At}(E_G)
$$
such that
\begin{equation}\label{e3}
(d'p)\circ\phi\,=\, \text{Id}_{TX}\, ,
\end{equation}
where $d'p$ is the projection in \eqref{e2} (see \cite{At}). A holomorphic connection on a
holomorphic bundle
over $X$ is automatically flat, because $\Omega^{2,0}_X\,=\, 0$. A holomorphic connection $\phi$
on $E_G$ gives a holomorphic decomposition $\text{At}(E_G)\,=\, TX\oplus \text{ad}(E_G)$ into
a direct sum of holomorphic vector bundles. This
decomposition produces a holomorphic homomorphism
\begin{equation}\label{e4}
\phi'\, :\, \text{At}(E_G)\, \longrightarrow\, \text{ad}(E_G)
\end{equation}
such that $\phi'\circ\iota\,=\, \text{Id}_{\text{ad}(E_G)}$, where $\iota$ is the homomorphism
in \eqref{e2}.

Take a holomorphic connection
\begin{equation}\label{f1}
\phi\, :\, TX\, \longrightarrow\, \text{At}(E_G)
\end{equation}
on $E_G$. Since $\text{At}(E_G)\,=\, (TE_G)/G$, this homomorphism $\phi$ produces
a $G$--equivariant holomorphic homomorphism of vector bundles
\begin{equation}\label{wp}
\widehat{\phi}\, :=\, p^*\phi \, :\, p^*TX\, \longrightarrow\, TE_G
\end{equation}
over $E_G$. Take any analytic open subset $U\, \subset\, X$. Let $s$ be a holomorphic section
of $\text{At}(E_G)\vert_U$ over $U$. Since $\text{At}(E_G)\,=\, (TE_G)/G$, we have
$$
\widehat{s}\, :=\, p^*s\, \in\, H^0(p^{-1}(U),\, TE_G)^G\, \subset\,
H^0(p^{-1}(U),\, TE_G)\, .
$$
For any holomorphic vector field $t\, \in\, H^0(U,\, TU)$, consider the Lie bracket
$$
[\widehat{\phi}(p^*t),\, \widehat{s}]\, \in\, H^0(p^{-1}(U),\, TE_G)\, ,
$$
where $\widehat{\phi}$ is the homomorphism in \eqref{wp}.
This vector field $[\widehat{\phi}(p^*t),\, \widehat{s}]$ on $p^{-1}(U)$ is
$G$--invariant, because both $\widehat{s}$ and $\widehat{\phi}(p^*t)$ are so.
Therefore, $[\widehat{\phi}(p^*t),\, \widehat{s}]$ produces a holomorphic section of
$\text{At}(E_G)$ over $U$; this section of $\text{At}(E_G)\vert_U$ will be denoted
by $A(t, \, s)$. Let
\begin{equation}\label{php}
\phi'(A(t,\, s))\, \in \, H^0(U,\, \text{ad}(E_G))
\end{equation}
be the section of $\text{ad}(E_G)\vert_U$, where $\phi'$ is the projection in \eqref{e4}.

Now, for any holomorphic function $f$ defined on $U$, we have
\begin{equation}\label{h1}
[\widehat{\phi}(p^*(f\cdot t)),\, \widehat{s}]\,=\,
(f\circ p)\cdot [\widehat{\phi}(p^*t),\, \widehat{s}]- 
\widehat{s}(f\circ p)\cdot \widehat{\phi}(p^*t)\, .
\end{equation}
Since $\phi'(\widehat{\phi}(p^*t))\,=\, 0$, where $\phi'$ is constructed
in \eqref{e4}, from \eqref{h1} it follows immediately that
$$
\phi'(A(f\cdot t,\, s))\,=\, f\cdot \phi'(A(t,\, s))\, ;
$$
$\phi'(A(t,\, s))$ is defined in \eqref{php}. Let
\begin{equation}\label{e6}
\Phi\, :\, \text{At}(E_G) \, \longrightarrow\, \text{ad}(E_G)\otimes K_X
\end{equation}
be the homomorphism of sheaves defined by the equation
$$
\langle \Phi(s),\, t\rangle \,=\, \phi'(A(t,\, s))\, \in\, H^0(U,\, \text{ad}(E_G)) \, ,
$$
where $s$ and $t$ are holomorphic sections, over $U$, of $\text{At}(E_G)$ and
$TX$ respectively, while $\langle -\, -\rangle$ is the contraction of $K_X$ by $T_X$.

\begin{remark}\label{rem-r1}
It should be mentioned that the map $\Phi$ in \eqref{e6} is an
additive homomorphism and it is $\mathbb 
C$--linear, but it is not ${\mathcal O}_X$--linear. In fact, the composition of 
homomorphisms
$$\Phi\circ\iota\, :\, \text{ad}(E_G)\, \longrightarrow\, \text{ad}(E_G)\otimes K_X,
$$
where $\iota$ is the inclusion map in \eqref{e2}, satisfies the Leibniz identity.
This map $\Phi\circ\iota$ is the connection on $\text{ad}(E_G)$ induced by the
connection $\phi$ on $E_G$.
\end{remark}

The composition
$$
\Phi\circ\phi\, :\, TX\, \longrightarrow\, \text{ad}(E_G)\otimes K_X
$$
coincides with the curvature of the connection $\phi$. Since $\phi$ is flat, we have
\begin{equation}\label{e7}
\Phi\circ\phi\, =\, 0\, .
\end{equation}

Let ${\mathcal C}_{\bullet}$ be the $2$--term complex
$$
{\mathcal C}_{\bullet} \, :\, {\mathcal C}_0\,:=\, \text{At}(E_G) \,
\stackrel{\Phi}{\longrightarrow}\, {\mathcal C}_1 \,:=\, \text{ad}(E_G)\otimes K_X\, ,
$$
where ${\mathcal C}_i$ is at the $i$--th position and $\Phi$ is the
$\mathbb C$--linear additive homomorphism constructed in \eqref{e6}.
Using \eqref{e7} we have the following commutative diagram of
homomorphisms of complexes of sheaves on $X$:
\begin{equation}\label{e9}
\begin{matrix}
&& 0 & & 0\\
&& \Big\downarrow & & \Big\downarrow \\
&& TX & \longrightarrow & 0\\
&& \,\,\,\, \Big\downarrow\phi && \Big\downarrow\\
{\mathcal C}_\bullet &: & {\mathcal C}_0 & \stackrel{\Phi}{\longrightarrow} & {\mathcal C}_1\\
&& \,\,\,\,\,\,\,\Big\downarrow= && \Big\downarrow\\
&& \text{At}(E_G) & \longrightarrow & 0\\
& & \Big\downarrow & & \Big\downarrow \\
& & 0 & & 0
\end{matrix}
\end{equation}
It should be clarified that this is not a complex of complexes of sheaves --- the composition map
does not vanish, because $\phi\, \not=\, 0$. Let
\begin{equation}\label{e10}
H^1(X,\, TX) \, \stackrel{\beta_\phi}{\longrightarrow}\, {\mathbb H}^1({X, \mathcal C}_\bullet)
\, \stackrel{\gamma}{\longrightarrow}\, H^1(X,\, \text{At}(E_G))
\end{equation}
be the homomorphisms of (hyper)cohomologies associated to the
homomorphisms in \eqref{e9}, where ${\mathbb H}^i$ denotes the
$i$--th hypercohomology. It should be clarified that $\gamma\circ\beta_\phi$ does not vanish. Indeed,
the composition of $\gamma\circ\beta_\phi$ with the homomorphism $$(d'p)_*\, :\, H^1(X,\, \text{At}(E_G))\,
\longrightarrow \, H^1(X,\, TX)\, ,$$ where $d'p$ is the projection in \eqref{e2}, coincides with the identity
map of $H^1(X,\, TX)$.

The following known theorem will be used (see \cite{Ch2}, \cite{Don}, \cite{In}, 
\cite{BHH}).

\begin{theorem}\label{thm1}
\mbox{}
\begin{enumerate}
\item The infinitesimal deformations of the pair $(X,\, E_G)$ are parametrized by
the elements of the cohomology $H^1(X,\, {\rm At}(E_G))$.

\item The infinitesimal deformations of the triple $(X,\, E_G,\, \phi)$ are parametrized by
the elements of the hypercohomology ${\mathbb H}^1(X,\, {\mathcal C}_\bullet)$.

\item The forgetful map from the 
infinitesimal deformations of the triple $(X,\, E_G,\, \phi)$ to the
infinitesimal deformations of the pair $(X,\, E_G)$, that forgets the connection $\phi$,
is the homomorphism $\gamma$ in \eqref{e10}.

\item The infinitesimal isomonodromy map, from the infinitesimal deformations of $X$ to the
infinitesimal deformations of the triple $(X,\, E_G,\, \phi)$, coincides with the homomorphism
$\beta_\phi$ in \eqref{e10}.
\end{enumerate}
\end{theorem}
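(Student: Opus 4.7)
The four statements are standard descriptions of deformation spaces via Čech hypercohomology, and I would prove them all simultaneously by setting up an explicit cocycle model on a fine Stein cover.

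Fix an open cover $\mathcal{U}=\{U_i\}$ of $X$ that trivializes $E_G$, with transition functions $g_{ij}:U_{ij}\to G$. In each trivialization the connection $\phi$ is encoded by a holomorphic $\mathfrak{g}$-valued $1$-form $A_i$, satisfying the cocycle identity $A_j = \mathrm{Ad}(g_{ij}^{-1})A_i + g_{ij}^{-1}dg_{ij}$ on $U_{ij}$. A first-order deformation of the triple $(X,E_G,\phi)$ is then a tuple consisting of (i) a deformation of the complex-analytic structure of $X$ recorded by vector fields $v_{ij}\in TX(U_{ij})$, (ii) a deformation $\dot{g}_{ij}$ of the transition functions, and (iii) a deformation $\dot{A}_i\in (\mathrm{ad}(E_G)\otimes K_X)(U_i)$ of the local connection $1$-forms, taken modulo gauge/coordinate changes on each $U_i$.

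For part (1), I would observe that the pair $(v_{ij},\dot g_{ij})$ is precisely a $1$-cocycle with values in $\mathrm{At}(E_G)$: indeed, the Atiyah sequence \eqref{e2} identifies local sections of $\mathrm{At}(E_G)$ with infinitesimal bundle automorphisms covering vector fields on the base, and the coboundary relation in $H^1(X,\mathrm{At}(E_G))$ corresponds exactly to the simultaneous action of $C^0(\mathrm{At}(E_G))$ on the deformation data. This is the classical argument of Atiyah--Donin/Chern--Chen recovering $H^1(X,\mathrm{At}(E_G))$ as the space of deformations of $(X,E_G)$. For part (2), I would then \emph{differentiate the cocycle identity} for $A_i$ in the deformation parameter: the resulting compatibility between $\{s_{ij}:=(v_{ij},\dot g_{ij})\}$ and $\{\dot A_i\}$ takes the form $\Phi(s_{ij}) = \dot A_i - \dot A_j$, which is exactly the $1$-hypercocycle condition for the total complex of $\mathcal{C}_\bullet$, while the gauge equivalences become the Čech coboundaries. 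Here one must verify that the ``variation of gauge potential'' formula produced by this differentiation matches the intrinsic definition \eqref{e6} of $\Phi$ via the bracket $[\widehat\phi(p^*t),\widehat s]$; this is where I expect the main bookkeeping difficulty to lie.

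Parts (3) and (4) then fall out of the explicit cocycle picture together with diagram \eqref{e9}. The quotient of complexes $\mathcal{C}_\bullet\to (\mathrm{At}(E_G)\to 0)$ sends $(\{s_{ij}\},\{\dot A_i\})$ to $\{s_{ij}\}$, i.e.\ forgets the deformation of the connection; by definition this is $\gamma$, proving (3). For (4), given $\mu\in H^1(X,TX)$ represented by $\{v_{ij}\}$, the map $\beta$ sends $\mu$ to the hypercocycle $(\{\phi(v_{ij})\},0)$. I would interpret this as follows: $\phi(v_{ij})$ is the \emph{horizontal lift} of $v_{ij}$ to $\mathrm{At}(E_G)$, so the deformations of $E_G$ are forced to be the ones obtained by $\phi$-parallel transport along the deformation of $X$; the vanishing second component means the connection is unchanged in this horizontal frame. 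Since parallel transport is unchanged, the monodromy representation is preserved to first order, which is the defining property of the isomonodromy map. The relation $\Phi\circ\phi=0$ from \eqref{e7} guarantees that $(\{\phi(v_{ij})\},0)$ genuinely satisfies the hypercocycle condition, closing the loop. Finally, I would note that $\gamma\circ\beta$ is induced by the composition $TX\xrightarrow{\phi}\mathrm{At}(E_G)$, which is nonzero on $H^1$ in general, confirming the remark after \eqref{e10}.
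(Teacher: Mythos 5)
The paper does not prove Theorem \ref{thm1} at all: it is stated as a known result and the reader is sent to Chen (Propositions 4.3, 4.4 and 5.1 of \cite{Ch2}), Donin, Inaba and \cite{BHH} for the proofs. Your \v{C}ech-hypercocycle sketch is essentially the argument carried out in those references, so you are reconstructing the cited proof rather than diverging from one in the paper. The skeleton is right: the pair $(v_{ij},\dot g_{ij})$ is an $\mathrm{At}(E_G)$-valued $1$-cocycle because a local section of $\mathrm{At}(E_G)$ is exactly an infinitesimal bundle automorphism covering a vector field, giving (1); a first-order deformation of the triple is a $1$-cocycle of the total complex of $\mathcal C_\bullet$, giving (2); and (3), (4) are read off from the two maps of complexes in \eqref{e9}, with \eqref{e7} ensuring that $\{v_{ij}\}\mapsto(\{\phi(v_{ij})\},0)$ is a chain map. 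That said, two verifications you defer are where the entire content of the theorem sits, and you should be aware they are not routine bookkeeping. First, the identity $\Phi(s_{ij})=\dot A_i-\dot A_j$, where $\Phi$ is the \emph{intrinsic} operator of \eqref{e6} defined through the bracket $[\widehat\phi(p^*t),\widehat s]$: until you check that differentiating $A_j=\mathrm{Ad}(g_{ij}^{-1})A_i+g_{ij}^{-1}dg_{ij}$ produces precisely that operator (and not merely some first-order differential operator with the same symbol), you have only shown that the deformations are classified by the hypercohomology of \emph{a} two-term complex with the right graded pieces, which is strictly weaker than (2). Second, in (4) the sentence ``since parallel transport is unchanged, the monodromy is preserved to first order'' is the statement to be proved, not an observation: one must check that the local $\phi$-flat frames deform coherently under the cocycle $(\{\phi(v_{ij})\},0)$, i.e.\ that its image under the differential of the monodromy map vanishes in $H^1(X,\mathbb L(\phi))$, and conversely that every isomonodromic first-order deformation of $X$ is represented by a cocycle of this form. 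Both computations are standard and are done in the cited sources, so the sketch is sound as an outline; it is just not yet a proof at its two load-bearing points.
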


For the proof of Theorem \ref{thm1} the reader is referred to \cite[p.~1413, Proposition 4.3]{Ch2} 
(for a proof of Theorem \ref{thm1}(1)), \cite[p.~ 1415, Proposition 4.4]{Ch2} (for
a proof of Theorem \ref{thm1}(2)) and 
\cite[p.~1417, Proposition 5.1]{Ch2} (for a proof of Theorem \ref{thm1}(4)); see also \cite{Ch1}.
Theorem \ref{thm1}(3) is evident.

The Atiyah exact sequence in \eqref{e2} produces a long exact sequence of cohomologies
$$
H^1(X,\, \text{ad}(E_G))\,\stackrel{\iota_*}{\longrightarrow}\, H^1(X,\, \text{At}(E_G))\,
\stackrel{(d'p)_*}{\longrightarrow}\, H^1(X,\, TX) \,\longrightarrow\, 0\, .
$$
We note that the infinitesimal deformations of $E_G$ (keeping the Riemann surface $X$ fixed) are 
parametrized by $H^1(X,\, \text{ad}(E_G))$ (see \cite{Don}), and the above homomorphism $\iota_*$ 
coincides with the natural homomorphism of infinitesimal deformations. The above projection 
$(d'p)_*$ is the forgetful map that sends the infinitesimal deformations of the pair $(X,\, E_G)$ 
to the infinitesimal deformations of $X$ that forgets the principal $G$--bundle.

\section{Infinitesimal deformations of connections on the trivial bundle}\label{section 3}

Recall the moduli space of differential systems $\text{Syst}$ and its subset of irreducible differential systems
$\text{Syst}^{\text{irred}}$, both defined in Section \ref{sec0}.

In this section we realize the tangent space to $(X,\, \phi) \,\in\, \text{Syst}^{\text{irred}}$ as 
a subspace embedded in ${\mathbb H}^1(X,\, {\mathcal C}_\bullet)$, the space of infinitesimal deformations of 
triples $(X,\, E_G,\, \phi)$ (see below), and we prove a criterion for transversality to the kernel of the 
monodromy map (Proposition \ref{prop1}).

Let $Y$ be a compact connected Riemann surface, and let $\psi$ be a holomorphic connection on the 
holomorphically trivial principal $G$--bundle $Y\times G\, \longrightarrow\, Y$ over $Y$. Let
\begin{equation}\label{e11}
{\mathcal T}(Y,\, \psi)
\end{equation}
denote the infinitesimal deformations of the pair $(Y,\, \psi)$ (keeping the underlying holomorphic
principal $G$--bundle to be the trivial principal $G$--bundle on the moving Riemann surface).

Henceforth, we assume that $\text{genus}(X)\,=\, g\, \geq\, 2$.

Let $Z_G\, \subset\, G$ be the center of $G$.
As before, $d\, :=\, \dim [G,\, G]$ and $c\, :=\, \dim Z_G$.

\begin{lemma}\label{lemsmm}
The moduli space $\text{Syst}^{\text{irred}}$ in Section \ref{sec0} is a connected (smooth) complex
orbifold. The complex dimension of ${\rm Syst}$ is $(g-1)(d+3)+gc$.
\end{lemma}

\begin{proof}
Let ${\mathbb T}_g$ denote the Teichm\"uller space for genus $g$ Riemann surfaces. We have an universal
family of genus $g$ Riemann surfaces 
$$
\varphi\, :\, {\mathbf C}_g\, \longrightarrow\, {\mathbb T}_g\, .
$$
Let $\Omega_\varphi\, \longrightarrow\, {\mathbf C}_g$ be the relative holomorphic cotangent bundle for the
projection $\varphi$. Consider the direct image
$$
{\mathcal W}\, :=\, \varphi_* \Omega_\varphi\, \longrightarrow\, {\mathbb T}_g\, .
$$
So ${\mathcal W}$ is a holomorphic vector bundle over ${\mathbb T}_g$ whose fiber over any
given Riemann surface $Y\, \in\, {\mathbb T}_g$ is $H^0(Y,\, K_Y)$. Now define the holomorphic
vector bundle ${\mathcal W}(G)$ over ${\mathbb T}_g$
$$
{\mathcal W}(G)\, :=\, {\mathcal W}\otimes_{\mathbb C} {\mathfrak g}\, .
$$
The adjoint action of the group $G$ on ${\mathfrak g}$ and the trivial action of
$G$ on ${\mathcal W}$ together produce an action of $G$ on ${\mathcal W}(G)$. Note that this action
of $G$ on ${\mathcal W}$ factors through the quotient $G/Z_G$ of $G$. Let
$$
\widehat{\mathcal W}(G)\, \subset\, {\mathcal W}(G)
$$
be the open subset consisting of all points $(Y,\, \theta)\, \in\, {\mathcal W}(G)$,
where $Y\, \in\, {\mathbb T}_g$ and $$\theta\, \in\, {\mathcal W}(G)_Y \,=\, H^0(Y,\, K_Y)
\otimes{\mathfrak g},$$
such that $\theta$ is not contained in $H^0(Y,\, K_Y)\otimes{\mathfrak p}$ for some parabolic
subgroup ${\mathfrak p}\, \subsetneq\, {\mathfrak g}$. For any
$(Y,\, \theta)\, \in\, \widehat{\mathcal W}(G)$, consider the holomorphic connection
$D^Y_0+\theta$, where $D^Y_0$ is the trivial connection on the
trivial holomorphic principal $G$--bundle $Y\times G\, \longrightarrow\, Y$. Let
$$
\Psi_\theta\, :\, \pi_1(Y, y_0)\, \longrightarrow\, G
$$
be the monodromy representation for this flat connection $D^Y_0+\theta$. The definition of
$\widehat{\mathcal W}(G)$ ensures that $\Psi_\theta(\pi_1(Y, y_0))$ is not contained in some
proper parabolic subgroup of $G$.

The action of $G/Z_G$ on ${\mathcal W}(G)$ evidently preserves $\widehat{\mathcal W}(G)$. For any
$(Y,\, \theta)\, \in\, \widehat{\mathcal W}(G)$, the isotropy subgroup of $(Y,\, \theta)$, for the
action of $G/Z_G$ on ${\mathcal W}(G)$, is $$N(\Psi_\theta(\pi_1(Y, y_0))')/\Psi_\theta(\pi_1(Y, y_0))'\, ,$$
where $\Psi_\theta(\pi_1(Y, y_0))'$ is the image of $\Psi_\theta(\pi_1(Y, y_0))$ in $G/Z_G$ and
$N(\Psi_\theta(\pi_1(Y, y_0))')\, \subset\, G/Z_G$ is its normalizer. From the
definition of $\widehat{\mathcal W}(G)$ it follows that this isotropy subgroup is finite.

For any Riemann surface $Y\, \in\, {\mathbb T}_g$, the space of all holomorphic connections on the
trivial holomorphic principal $G$--bundle $Y\times G\, \longrightarrow\, Y$ is an affine space for the vector space
${\mathcal W}(G)_Y$, where ${\mathcal W}(G)_Y$ is the fiber of ${\mathcal W}(G)$ over the point $Y$
(see \eqref{f3}); note that $\text{ad}(Y\times G)\,=\, Y\times{\mathfrak g}$.
Consequently, we have a biholomorphism from the quotient space $\widehat{\mathcal W}(G)/G$
$$
\eta\, :\, \widehat{\mathcal W}(G)/G\, \stackrel{\sim}{\longrightarrow}\, {\rm Syst}^{\text{irred}}
$$
that sends any $\theta \, \in\, \widehat{\mathcal W}(G)_Y$, $Y\, \in\, {\mathbb T}_g$, to
the holomorphic connection $D^Y_0+\theta$, where $D^Y_0$ as before is the trivial connection on the
trivial holomorphic principal $G$--bundle $Y\times G\, \longrightarrow\, Y$; note that
$D^Y_0$ does not depend on the choice of the holomorphic trivialization of the
principal $G$--bundle (see the paragraph following \eqref{e12}). Also, this map evidently factors
through the quotient $\widehat{\mathcal W}(G)/G$. In view of the
biholomorphism $\eta$ we conclude that ${\rm Syst}^{\text{irred}}$ is a connected complex orbifold.

The complex dimension of ${\rm Syst}^{\text{irred}}$ is $(g-1)(d+3)+gc$, because the complex dimension of the total
space of ${\mathcal W}(G)$ is $3(g-1)+dg+gc$, and hence the complex dimension of the total
space of $\widehat{\mathcal W}(G)$ is $(g-1)(d+3)+gc$.
\end{proof}

Now take $E_G$ in \eqref{e1} to be the holomorphically trivial principal $G$--bundle
$X\times G$ on $X$. As in \eqref{f1}, take a holomorphic connection $\phi$ on $E_G\,=\,
X\times G$. Since the infinitesimal deformations of the triple $(X,\, E_G,\, \phi)$
are parametrized by ${\mathbb H}^1(X,\, {\mathcal C}_\bullet)$ (see Theorem \ref{thm1}(2)),
we have a natural homomorphism
$$
{\mathcal T}(X,\, \phi)\, \longrightarrow\, {\mathbb H}^1(X,\, {\mathcal C}_\bullet)\, ,
$$
where ${\mathcal T}(X,\, \phi)$ is defined in \eqref{e11}. Let
\begin{equation}\label{e12}
{\mathcal S}(X,\, \phi)\, \subseteq\, {\mathbb H}^1(X,\, {\mathcal C}_\bullet)
\end{equation}
be the image of this homomorphism from ${\mathcal T}(X,\, \phi)$.

The trivial holomorphic principal $G$--bundle $E_G\,=\, X\times G$ has a unique holomorphic
connection whose monodromy is the trivial representation. Once we fix an isomorphism
of $E_G$ with $X\times G$, the trivial holomorphic connection on $X\times G$ induces
a holomorphic connection on $E_G$ using the chosen isomorphism. However, this induced connection
on $E_G$ does not depend on the choice of the trivialization of $E_G$; this
unique connection on $E_G$ will be called the trivial connection. The monodromy
of the trivial connection is evidently trivial.

Using the trivial connection on $E_G$, we have a canonical holomorphic decomposition
\begin{equation}\label{f2}
\text{At}(E_G)\,=\, \text{ad}(E_G)\oplus TX\, .
\end{equation}
Using \eqref{f2}, the holomorphic connections on $E_G$ are identified with holomorphic homomorphisms
from $TX$ to $\text{ad}(E_G)$. More precisely, to any holomorphic homomorphism
\begin{equation}\label{e13}
\rho\, :\, 
TX\, \longrightarrow\, \text{ad}(E_G)\, 
\end{equation}
we assign the corresponding homomorphism
\begin{equation}\label{f3}
\widehat{\rho}\, :\, TX\, \longrightarrow\, \text{ad}(E_G)\oplus TX\, =\, \text{At}(E_G)\, ,\ \
v\, \longmapsto\, (\rho(v),\, v)
\end{equation}
clearly $\widehat{\rho}$ satisfies the equation in \eqref{e3}. Note that the
decomposition in \eqref{f2} is used in the construction of $\widehat\rho$ in \eqref{f3}.

\begin{proposition}\label{prop1}
Let $\phi$ be a holomorphic connection on $E_G\,=\, X\times G$ such that
the homomorphism $\rho$ in \eqref{e13} corresponding to $\phi$ satisfies the
following condition: the homomorphism of first cohomologies corresponding to $\rho$, namely 
the homomorphism
$$
\rho_*\, :\, H^1(X, \, TX) \, \longrightarrow\, H^1(X,\, {\rm ad}(E_G))\, ,
$$
is injective. Then
$$
{\mathbb H}^1(X,\, {\mathcal C}_\bullet) \, \supset\, {\mathcal S}(X,\, \phi)\cap \beta_\phi (H^1(X,\, TX))
\,=\, 0\, ,
$$
where ${\mathcal S}(X,\, \phi)$ is the subspace constructed in \eqref{e12} and $\beta_\phi$
is the homomorphism in \eqref{e10}.
\end{proposition}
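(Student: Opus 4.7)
The plan is to use the commutative diagram \eqref{e9} to compute the composition $\gamma\circ\beta$ explicitly, and independently to identify $\gamma({\mathcal S}(X,\phi))$ as a distinguished subspace of $H^1(X,\text{At}(E_G))$. Once both images are described in terms of the splitting \eqref{f2} coming from the trivial connection, the injectivity of $\rho_*$ will force any element of the intersection to vanish.

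Take any $\xi\in{\mathcal S}(X,\phi)\cap\beta(H^1(X,TX))$ and write $\xi=\beta(\eta)$ for some $\eta\in H^1(X,TX)$. The commutative diagram \eqref{e9} shows that $\gamma\circ\beta$ is the map in first cohomology induced by the sheaf homomorphism $\phi:TX\to\text{At}(E_G)$. The decomposition \eqref{f2} gives
$$H^1(X,\text{At}(E_G))\,=\,H^1(X,\text{ad}(E_G))\oplus H^1(X,TX),$$
and under this decomposition the description $\phi=\widehat{\rho}$ in \eqref{f3} becomes $\phi_*=(\rho_*,\text{Id})$. Hence
$$\gamma(\xi)\,=\,\gamma(\beta(\eta))\,=\,(\rho_*(\eta),\,\eta).$$

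For the second description, Theorem \ref{thm1}(3) says that $\gamma$ is the forgetful map sending an infinitesimal deformation of $(X,E_G,\phi)$ to the underlying deformation of $(X,E_G)$. Since $\xi\in{\mathcal S}(X,\phi)$ comes from a deformation of $(X,\phi)$ in which the underlying principal $G$--bundle is kept equal to the (canonically defined) trivial bundle on the deforming Riemann surface, the corresponding class in $H^1(X,\text{At}(E_G))$ is the one that lifts the Teichm\"uller deformation of $X$ via the trivial connection. With respect to the splitting \eqref{f2}, this means the $\text{ad}(E_G)$--component of $\gamma(\xi)$ vanishes, so $\gamma(\xi)\,\in\,\{0\}\oplus H^1(X,TX)$.

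Comparing the two expressions for $\gamma(\xi)$ forces $\rho_*(\eta)=0$; by the injectivity hypothesis $\eta=0$, and hence $\xi=\beta(0)=0$, as required. The conceptually delicate step is the third paragraph's identification of $\gamma({\mathcal S}(X,\phi))$ with the $H^1(X,TX)$--summand of $H^1(X,\text{At}(E_G))$: it rests on the fact that keeping the principal bundle trivial along the deformation corresponds exactly to lifting $H^1(X,TX)$ into $H^1(X,\text{At}(E_G))$ via the trivial connection. The remainder is a straightforward chase through the diagram \eqref{e9} combined with the injectivity hypothesis on $\rho_*$.
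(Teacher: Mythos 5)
Your proposal is correct and follows essentially the same route as the paper: both identify $\gamma\circ\beta$ with $(\rho_*,\mathrm{Id})$ under the splitting \eqref{f2} and identify $\gamma({\mathcal S}(X,\phi))$ with the $H^1(X,TX)$--summand via the trivial connection, then invoke injectivity of $\rho_*$. The only (harmless) difference is organizational: the paper first proves $\gamma\circ\beta$ is injective via $q_*\circ\gamma\circ\beta=\mathrm{Id}$ and then reduces to the intersection of the $\gamma$--images, whereas you conclude directly from $\eta=0$ that $\xi=\beta(0)=0$.
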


\begin{proof}
Let
$$
q\, :\, \text{At}(E_G)\,=\, \text{ad}(E_G)\oplus TX \, \longrightarrow \, TX
$$
be the projection constructed using the decomposition in \eqref{f2}. Note that
$q$ coincides with the projection $d'p$ in \eqref{e2}; indeed, this follows from
the construction of the decomposition in \eqref{f2}. Let
\begin{equation}\label{q}
q_*\, :\, H^1(X,\, \text{At}(E_G))\,=\, H^1(X,\, \text{ad}(E_G))\oplus
H^1(X,\, TX) \, \longrightarrow \,H^1(X,\, TX)
\end{equation}
be the homomorphism of first cohomologies induced by $q$. From the equation
$$
q\circ \phi\,=\, \text{Id}_{TX}
$$
(see \eqref{e3}) it can be deduced that
\begin{equation}\label{e14}
q_*\circ\gamma\circ\beta_\phi\,=\, \text{Id}_{H^1(X, TX)}\, ,
\end{equation}
where $\beta_\phi,\, \gamma$ are the homomorphisms in \eqref{e10} and $q_*$ is constructed
in \eqref{q}. To prove \eqref{e14}, just note that $\gamma\circ\beta_\phi$ coincides with the homomorphism
of cohomologies induced by the homomorphism $\phi$.

Consider the two subspaces
\begin{equation}\label{e15}
\gamma ({\mathcal S}(X,\, \phi))\, ,\, ~ \gamma(\beta_\phi (H^1(X,\, TX)))\, \subset\,
H^1(X,\, \text{At}(E_G))
\end{equation}
of $H^1(X,\, \text{At}(E_G))$,
where $\beta_\phi,\, \gamma$ are the homomorphisms in \eqref{e10}. From \eqref{e14} it follows immediately
that the homomorphism $\gamma\circ\beta_\phi$ is injective.

Consequently, to prove the proposition
it suffices to show that
\begin{equation}\label{e16}
\gamma ({\mathcal S}(X,\, \phi))\cap (\gamma(\beta_\phi (H^1(X,\, TX))))\, =\, 0\, ,
\end{equation}
where $\gamma({\mathcal S}(X,\, \phi))$ and $\gamma(\beta_\phi (H^1(X,\, TX)))$ are the
subspaces in \eqref{e15}. At this point it might be helpful to have a look at Remark \ref{rem-pr}.

The isomonodromic deformation of the trivial connection on $E_G\, \longrightarrow\, X$ is evidently 
the trivial connection on the trivial principal $G$--bundle over the moving Riemann surface. Since 
the decomposition in \eqref{f2} is given by the trivial connection on $E_G$, from Theorem 
\ref{thm1}(4) and Theorem \ref{thm1}(3) it follows that the subspace
$$
\gamma ({\mathcal S}(X,\, \phi))\, \subset\, H^1(X,\, \text{At}(E_G))
$$
coincides with the natural subspace
\begin{equation}\label{sp}
H^1(X,\, TX)\, \subset\, H^1(X,\, \text{ad}(E_G))\oplus H^1(X,\, TX)
\end{equation}
$$
=\, H^1(X,\, \text{ad}(E_G)\oplus TX)\,=\, H^1(X,\, \text{At}(E_G))
$$
corresponding to the decomposition in \eqref{f2} (given by the trivial connection);
the subspace $H^1(X,\, TX)\, \subset\, H^1(X,\, \text{ad}(E_G))\oplus H^1(X,\, TX)$ in
\eqref{sp} consists of all $(0,\, v)$ with $v\, \in\, H^1(X,\, TX)$. The above statement follows from the
fact that for the isomonodromic deformation of the trivial connection on $E_G\, \longrightarrow\, X$, the
underlying principal $G$--bundle remains trivial (recall that the
isomonodromic deformation of the trivial connection on $E_G\, \longrightarrow\, X$ is
the trivial connection on the trivial principal $G$--bundle over the moving Riemann surface).

Therefore, using the construction of the homomorphism $\widehat{\rho}$ from $\rho$ (see \eqref{f3} --- note
that $\phi\,=\, \widehat{\rho}$ by the definition of $\rho$ given in the
statement of the proposition) it follows that the given condition --- that
$$
\rho_*\, :\, H^1(X, \, TX) \, \longrightarrow\, H^1(X,\, {\rm ad}(E_G))
$$
is injective --- implies that \eqref{e16} holds. As observed before, \eqref{e16} completes the
proof of the proposition.
\end{proof}

\begin{remark}\label{rem-pr}
Since Proposition \ref{prop1} is the key tool here, we would make some clarifying comments 
on the proof of it. Let $A,\, B$ be finite dimensional vector spaces and $$H\, :\, A\, 
\longrightarrow\, B$$ a linear map. Let $S_1,\, S_2$ be subspaces of $A$ such that the 
homomorphism $$H\vert_{S_2}\, :\, S_2\, \longrightarrow\, B$$ is injective (in other words, 
$S_2\bigcap \text{kernel}(H)\,=\, 0$). Now, if $H(S_1)\bigcap H(S_2)\,=\, 0$, then it is 
straight-forward to check that $S_1\bigcap S_2\,=\, 0$. In the proof of Proposition 
\ref{prop1}, set $$A\,=\, {\mathbb H}^1(X,\, {\mathcal C}_\bullet),\,\ B\,=\, H^1(X,\, 
\text{At}(E_G)),\,\ S_1\,=\, {\mathcal S}(X,\, \phi),$$
$S_2\,=\, \beta_\phi (H^1(X,\, TX))$ 
and $H\,=\, \gamma$. The above condition that $H(S_1)\bigcap H(S_2)\,=\, 0$ coincides with 
\eqref{e16}. The above statement that $S_1\bigcap S_2\,=\, 0$ actually coincides with the 
statement of Proposition \ref{prop1}.
\end{remark}

\section{Some examples with $G\,=\, {\rm SL}(2,{\mathbb C})$}\label{section 4}

This section focuses on the case $G\,=\, {\rm SL}(2,{\mathbb C})$. In this case we prove that the 
criterion in Proposition \ref{prop1} is satisfied at any point $(X, \phi) \in 
\text{Syst}^{\text{irred}}$ for $g\,=\, 2$ (see Proposition \ref{prop2}) and at the generic point 
in $\text{Syst}^{\text{irred}}$ for $g\,=\, 3$ (see Lemma \ref{lem1}).

\begin{proposition}\label{prop2}
Let $X$ be a compact connected Riemann surface of genus two. Set 
$G\,=\, {\rm SL}(2,{\mathbb C})$. Let $\phi$ be an irreducible
holomorphic connection on the trivial holomorphic principal $G$--bundle
$E_G\,=\, X\times G\, \longrightarrow\, X$. Then the homomorphism
$\rho$ as in \eqref{e13} corresponding to $\phi$ satisfies the
following condition: the homomorphism of cohomologies corresponding to $\rho$, namely the homomorphism
$$
\rho_*\, :\, H^1(X, \, TX) \, \longrightarrow\, H^1(X,\, {\rm ad}(E_G))\, ,
$$
is injective.
\end{proposition}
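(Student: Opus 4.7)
The plan is to dualize via Serre duality and verify that the transpose map is surjective. Serre duality identifies $H^1(X,TX)^* \cong H^0(X,K_X^{\otimes 2})$ and $H^1(X,\mathrm{ad}(E_G))^* \cong H^0(X,K_X \otimes \mathrm{ad}(E_G)^*)$; since $E_G = X \times G$ is trivial, $\mathrm{ad}(E_G) = X \times \mathfrak{g}$, and using the Killing form to identify $\mathfrak{g}^* \cong \mathfrak{g}$, the dual of $\rho_*$ becomes a linear map
\[
\rho^*\colon H^0(X,K_X) \otimes \mathfrak{g} \,\longrightarrow\, H^0(X,K_X^{\otimes 2}),
\]
whose surjectivity is equivalent to the injectivity of $\rho_*$ asserted in the proposition.

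First, I would compute $\rho^*$ explicitly. Under the trivialization, $\rho$ corresponds to an element $\omega_1 \otimes A_1 + \omega_2 \otimes A_2 \in H^0(X,K_X) \otimes \mathfrak{g}$, where $\{\omega_1,\omega_2\}$ is a basis of the $2$-dimensional space $H^0(X,K_X)$ and $A_1, A_2 \in \mathfrak{g}$. Tracing through the Serre duality pairing one obtains $\rho^*(\eta \otimes B) = \langle A_1, B\rangle\,\omega_1\eta + \langle A_2, B\rangle\,\omega_2\eta$, where $\langle -,-\rangle$ denotes the Killing form. Consequently the image of $\rho^*$ coincides with $W \cdot H^0(X,K_X) \subset H^0(X,K_X^{\otimes 2})$, where $W \subset H^0(X,K_X)$ is the span of the $\omega_i$ corresponding to a basis of the subspace $\mathrm{span}(A_1, A_2) \subset \mathfrak{g}$.

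Next I would exploit the irreducibility hypothesis to show $A_1, A_2$ are linearly independent. If they spanned only a single line $\mathbb{C}\cdot A \subset \mathfrak{sl}(2,\mathbb{C})$, then $\rho$ would take values in this line, which is automatically contained in some Borel subalgebra $\mathfrak{b}$; the connection $\phi$, which is $d + \rho$ under the trivialization, would then preserve the corresponding reduction of $E_G$ to a Borel subgroup of $G = \mathrm{SL}(2,\mathbb{C})$, contradicting the irreducibility of $\phi$. Hence $A_1, A_2$ are independent, and the non-degeneracy of the Killing form (pick $B_1, B_2 \in \mathfrak{g}$ with $\langle A_i, B_j\rangle = \delta_{ij}$) forces $W = H^0(X,K_X)$.

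Finally I would invoke the surjectivity of the multiplication map $H^0(X,K_X) \otimes H^0(X,K_X) \to H^0(X,K_X^{\otimes 2})$ for $g = 2$. Here $\mathrm{Sym}^2 H^0(X,K_X)$ and $H^0(X,K_X^{\otimes 2})$ both have dimension $3$, so it suffices to exhibit injectivity on the symmetric square; using a hyperelliptic model $y^2 = f(x)$ with basis $\{dx/y,\, x\,dx/y\}$ of $H^0(X,K_X)$, the three pairwise products $dx^2/f,\, x\,dx^2/f,\, x^2\,dx^2/f$ are manifestly linearly independent. Combined with the previous step this gives that the image of $\rho^*$ equals $H^0(X,K_X)\cdot H^0(X,K_X) = H^0(X,K_X^{\otimes 2})$, proving the surjectivity of $\rho^*$ and hence the proposition. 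The main subtlety is the surjectivity of this multiplication map: genus $2$ works by a numerical coincidence ($3g-3 = 2g-1 = 3$ when $g=2$), whereas for hyperelliptic curves of genus $g \geq 3$ the map famously fails to be surjective, which is presumably why the higher-genus cases treated later rely on the more delicate theorem of Lazarsfeld about general three-dimensional subspaces of $H^0(X,K_X)$.
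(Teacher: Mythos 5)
Your proof is correct and follows essentially the same route as the paper: irreducibility forces the span of the coefficient one-forms of $\rho$ to be all of the two-dimensional space $H^0(X,K_X)$, and Serre duality reduces the injectivity of $\rho_*$ to the surjectivity of the multiplication map $H^0(X,K_X)\otimes H^0(X,K_X)\longrightarrow H^0(X,K_X^{\otimes 2})$. The only (minor) divergence is in the final step, where the paper checks this surjectivity via the diagonal exact sequence on $X\times X$ together with a dimension count, whereas you exhibit the explicit hyperelliptic basis $\{dx/y,\ x\,dx/y\}$ and its three independent products --- both verifications are valid.
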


\begin{proof}
Fix a holomorphic trivialization of $E_G$. Then the adjoint vector bundle $\text{ad}(E_G)$ is the 
trivial holomorphic vector bundle $X\times {\mathfrak s}{\mathfrak l}(2, {\mathbb C})$ over $X$, 
where ${\mathfrak s}{\mathfrak l}(2, {\mathbb C})$ is the Lie algebra consisting of $2\times 2$ 
complex matrices of trace zero. So $\rho$ as in \eqref{e13} corresponding to $\phi$
$$
\rho\, :\, TX\, \longrightarrow\, \text{ad}(E_G)\,=\,
X\times {\mathfrak s}{\mathfrak l}(2, {\mathbb C})
$$
sends any $v\, \in\, T_xX$ to
$$
\left(x,\,
\begin{pmatrix}
\omega_1(x)(v) & \omega_2(x)(v)\\
\omega_3(x)(v) & -\omega_1(x)(v)
\end{pmatrix}\right)\,\in\, X\times {\mathfrak s}{\mathfrak l}(2, {\mathbb C})\, ,
$$
where $\omega_j\, \in\, H^0(X,\, K_X)$, $1\, \leq\, j\, \leq\, 3$. 

Let
\begin{equation}\label{eV}
V\,\subset\, H^0(X,\, K_X)
\end{equation}
be the linear span of $\{\omega_1,\, \omega_2,\, \omega_3\}$. The given condition that
the connection $\phi$ is irreducible implies that $\dim V\, >\, 1$. Indeed, if we assume by
contradiction that $\dim V\, \leq\, 1$, then
$$
\rho\, =\, \begin{pmatrix}
\omega_1 & \omega_2\\
\omega_3 & -\omega_1
\end{pmatrix}
 =\, \omega\otimes B\, ,
$$
where $\omega\, \in\, H^0(X,\, K_X)$ and $B\,\in\, {\mathfrak s}{\mathfrak l}(2, {\mathbb C})$ is a 
fixed element. Since the standard action on ${\mathbb C}^2$ of $B$ is reducible, the connection 
$\phi$ is reducible: a contradiction. So we have $\dim V\, >\, 1$, and, since $\dim H^0(X,\, 
K_X)\,=\, 2$, we conclude that
\begin{equation}\label{eV2}
V\,=\, H^0(X,\, K_X)\, .
\end{equation}

Let
\begin{equation}\label{The}
\Theta\, :\, H^1(X,\, TX)\otimes H^0(X,\, K_X)\, \longrightarrow\, 
H^1(X,\, TX\otimes K_X)\,=\, H^1(X,\, {\mathcal O}_X)
\end{equation}
be the natural homomorphism. For any $\mathbb C$--linear map
\begin{equation}\label{eh}
h\, :\, {\mathfrak s}{\mathfrak l}(2, {\mathbb C})\, \longrightarrow\,\mathbb C\, ,
\end{equation}
let
\begin{equation}\label{eh2}
\widetilde{h}_*\, :\, H^1(X,\, TX)\, \longrightarrow\, H^1(X,\, {\mathcal O}_X)
\end{equation}
be the homomorphism induced by the composition
$$
TX \, \stackrel{\rho}{\longrightarrow}\, \text{ad}(E_G)\,=\, {\mathcal O}_X\times_{\mathbb C}
{\mathfrak s}{\mathfrak l}(2, {\mathbb C})\, \stackrel{\text{Id}\otimes h}{\longrightarrow}\,
{\mathcal O}_X\, .
$$
For any
$$
\mu\, \in\, \text{kernel}(\rho_*)\, \subset\, H^1(X,\, TX)\, ,
$$
we evidently have
\begin{equation}\label{j1}
\widetilde{h}_* (\mu)\,=\, 0\, ,
\end{equation}
because
$\widetilde{h}_*\,=\,(\text{Id}\otimes h)_*\circ\rho_*$, where
$(\text{Id}\otimes h)_*$ is the homomorphism of cohomologies induced by
$\text{Id}\otimes h$.

{}From \eqref{j1} it follows that $\Theta(\mu\otimes V)\,=\, 0$ for
all $\mu\, \in\, \text{kernel}(\rho_*)$, where $\Theta$ is the homomorphism in \eqref{The}
and $V$ is the subspace in \eqref{eV}.
Now, since $V\,=\, H^0(X,\, K_X)$ (see \eqref{eV2}), we conclude that
\begin{equation}\label{e17}
\Theta(\mu\otimes \omega)\,=\, 0
\end{equation}
for all $\mu\, \in\, \text{kernel}(\rho_*)$ and $\omega\,\in\, H^0(X,\, K_X)$.

To complete the proof of the proposition we need to show that there is no nonzero cohomology class 
$\mu\,\in\, H^1(X,\, TX)$ that satisfies \eqref{e17} for all $\omega\, \in\, H^0(X,\, K_X)$.

Using Serre duality, it suffices to prove that the tensor product homomorphism
\begin{equation}\label{thp}
\Theta'\, :\, H^0(X,\, K_X)\otimes H^0(X,\, K_X)\, \longrightarrow\,
H^0(X,\, K^2_X)
\end{equation}
is surjective; note that $\Theta'$ is given by the dual of $\Theta$.

It is known that for a genus two Riemann surface $X$, the homomorphism
$\Theta'$ in \eqref{thp} is indeed surjective. To be somewhat self-contained, we give the outline of
an argument for it. Consider the short exact sequence of sheaves on $X\times X$
\begin{equation}\label{e18}
0\, \longrightarrow\, (p^*_1K_X)\otimes (p^*_2K_X)\otimes {\mathcal O}_{X\times X}
(-\Delta) \, \longrightarrow\, (p^*_1K_X)\otimes (p^*_2K_X)
\, \longrightarrow\, i_*K^2_X\, \longrightarrow\, 0\, ,
\end{equation}
where $p_j$ is the projection of $X\times X$ to the $j$--th factor for $j\,=\, 1,\, 2$,
and $i$ is the inclusion map of the diagonal $\Delta\, \subset\, X\times X$. Using the
short exact sequence
$$
0\, \longrightarrow\, (p^*_2K_X)\otimes{\mathcal O}_{X\times X}(-\Delta)\, \longrightarrow\,
p^*_2K_X \, \longrightarrow\, i_* K_X \, \longrightarrow\, 0
$$
we have
$$
p_{1*}((p^*_2K_X)\otimes{\mathcal O}_{X\times X}(-\Delta))\,=\, (\bigwedge\nolimits^2
(p_{1*}p^*_2K_X))\otimes (p_{1*}(i_* K_X))^*\,=\, TX\, ;
$$
this is because the homomorphism $p_{1*}p^*_2K_X\, \longrightarrow\, p_{1*}(i_* K_X)$ is
surjective as $K_X$ is base-point free. Therefore, the projection formula gives that
$$
p_{1*}((p^*_1K_X)\otimes (p^*_2K_X)\otimes{\mathcal O}_{X\times X}
(-\Delta))\,=\, {\mathcal O}_X\, .
$$
Hence we have
\begin{equation}\label{dc}
H^0(X\times X,\, (p^*_1K_X)\otimes (p^*_2K_X)\otimes{\mathcal O}_{X\times X}
(-\Delta))\,=\, H^0(X,\, {\mathcal O}_X)\, .
\end{equation}
Consider the long exact sequence of cohomologies for the short exact sequence of
sheaves in \eqref{e18}
\begin{equation}\label{dc2}
0\, \longrightarrow\, H^0(X\times X,\, (p^*_1K_X)\otimes (p^*_2K_X)\otimes{\mathcal O}_{X\times X}
(-\Delta)) 
\end{equation}
$$
\, \longrightarrow\, H^0(X\times X,\, (p^*_1K_X)\otimes (p^*_2K_X))
\,=\, H^0(X,\, K_X)^{\otimes 2}\, \stackrel{\Theta'}{\longrightarrow}\, H^0(X,\, K^2_X)\, ,
$$
where $\Theta'$ is the homomorphism in \eqref{thp}.
Since $\dim H^0(X,\, K_X)^{\otimes 2}\,=\, 4 \,=\, \dim H^0(X,\, K^2_X) + 1$, from
\eqref{dc} and \eqref{dc2} it follows that $\Theta'$ is surjective.
\end{proof}

\begin{lemma}\label{lem1}
Let $X$ be a compact connected Riemann surface of genus three which is not hyperelliptic. Set 
$G\,=\, {\rm SL}(2,{\mathbb C})$. Then there is a nonempty Zariski open subset
$\mathcal U$ of the space of all holomorphic connections 
on the trivial holomorphic principal $G$--bundle
$E_G\,=\, X\times G\, \longrightarrow\, X$ such that for any
$\phi\, \in\, \mathcal U$, the homomorphism $\rho$ in \eqref{e13} corresponding
to $\phi$ satisfies the following condition: the
homomorphism of cohomologies corresponding to $\rho$, namely the homomorphism
$$
\rho_*\, :\, H^1(X, \, TX) \, \longrightarrow\, H^1(X,\, {\rm ad}(E_G))\, ,
$$
is injective.
\end{lemma}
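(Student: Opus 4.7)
The plan is to follow the strategy of Proposition \ref{prop2}: reduce the injectivity of $\rho_*$ to the surjectivity of a multiplication map on global sections of $K_X$, and then invoke Max Noether's theorem, which applies precisely because $X$ is non-hyperelliptic of genus $\geq 3$.

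First, I would parametrize the space of holomorphic connections on $E_G = X \times G$ by triples $(\omega_1, \omega_2, \omega_3) \in H^0(X, K_X)^{\oplus 3}$, with the corresponding map $\rho : TX \to \mathrm{ad}(E_G) = X \times \mathfrak{sl}(2, \mathbb{C})$ given by the matrix $\begin{pmatrix} \omega_1 & \omega_2 \\ \omega_3 & -\omega_1 \end{pmatrix}$, exactly as in the proof of Proposition \ref{prop2}. Write $W := \mathrm{span}(\omega_1, \omega_2, \omega_3) \subset H^0(X, K_X)$. Copying the argument of Proposition \ref{prop2} verbatim (apply all linear functionals $h : \mathfrak{sl}(2, \mathbb{C}) \to \mathbb{C}$ and deduce that any $\mu \in \ker(\rho_*)$ satisfies $\Theta(\mu, \omega) = 0$ for every $\omega \in W$, with $\Theta$ as in \eqref{The}) and invoking Serre duality, $\rho_*$ is injective if and only if the multiplication map
$$ W \otimes H^0(X, K_X) \longrightarrow H^0(X, K_X^2) $$
is surjective.

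Second, I would take $\mathcal{U}$ to be the Zariski open subset of the space of connections defined by the nonvanishing condition $\omega_1 \wedge \omega_2 \wedge \omega_3 \neq 0$ in the one-dimensional space $\Lambda^3 H^0(X, K_X)$. This set is manifestly nonempty, as any basis of $H^0(X, K_X)$ furnishes such a triple. Because $\dim H^0(X, K_X) = g = 3$, for every $\phi \in \mathcal{U}$ we have $W = H^0(X, K_X)$, so the surjectivity required above becomes that of the canonical multiplication map $H^0(X, K_X) \otimes H^0(X, K_X) \to H^0(X, K_X^2)$.

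Finally, I would appeal to Max Noether's theorem: for a non-hyperelliptic compact Riemann surface of genus $g \geq 3$, the multiplication $H^0(X, K_X) \otimes H^0(X, K_X) \to H^0(X, K_X^2)$ is surjective (when $g = 3$ both the symmetrized domain and the target have dimension six, so this map is in fact an isomorphism after passing to $\mathrm{Sym}^2$). Together with the preceding reduction this yields the injectivity of $\rho_*$ for all $\phi \in \mathcal{U}$. I do not foresee a serious obstacle: the Serre-duality reduction repeats the one already carried out in Proposition \ref{prop2}, and the openness and nonemptiness of $\mathcal{U}$ are immediate from the description in terms of $\omega_1 \wedge \omega_2 \wedge \omega_3$. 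The decisive external input is Noether's theorem itself, whose failure for hyperelliptic curves is exactly why the non-hyperellipticity hypothesis enters the statement.
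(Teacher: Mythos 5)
Your proof is correct and follows essentially the same route as the paper: reduce injectivity of $\rho_*$ via Serre duality to surjectivity of the multiplication map $H^0(X,K_X)\otimes H^0(X,K_X)\to H^0(X,K_X^2)$ and invoke Max Noether's theorem. The only (harmless) difference is that you exhibit the Zariski open set explicitly as $\{\omega_1\wedge\omega_2\wedge\omega_3\neq 0\}$, whereas the paper verifies injectivity for one connection built from a basis of $H^0(X,K_X)$ and then appeals to the Zariski-openness of the injectivity condition.
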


\begin{proof}
As in the proof of Proposition \ref{prop2}, fixing a holomorphic trivialization of $E_G$, 
identifies $\text{ad}(E_G)$ with the trivial holomorphic vector bundle $X\times {\mathfrak 
s}{\mathfrak l}(2, {\mathbb C})$ over $X$. Let $\{\omega_j\}_{j=1}^3$ be a basis of $H^0(X,\, K_X)$ 
(recall that $H^0(X,\, K_X)$ has dimension three). Define the homomorphism $\rho$
$$
\rho\, :\, TX\, \longrightarrow\, \text{ad}(E_G)\,=\,
X\times {\mathfrak s}{\mathfrak l}(2, {\mathbb C})
$$
that sends any $v\, \in\, T_xX$ to
$$
\left(x,\,
\begin{pmatrix}
\omega_1(x)(v) & \omega_2(x)(v)\\
\omega_3(x)(v) & -\omega_1(x)(v)
\end{pmatrix}\right)\,\in\, X\times {\mathfrak s}{\mathfrak l}(2, {\mathbb C})\, .
$$

As in \eqref{The}, let
$$
\Theta\, :\, H^1(X,\, TX)\otimes H^0(X,\, K_X)\, \longrightarrow\, H^1(X,\, {\mathcal O}_X)
$$
be the natural homomorphism. For any $h$ as in \eqref{eh}, the homomorphism
$\widetilde{h}_*$ as in \eqref{eh2} vanishes. Therefore, from the above construction of
$\rho$ it follows immediately that every
$$
\mu\, \in\, \text{kernel}(\rho_*)\, \subset\, H^1(X,\, TX)\, ,
$$
satisfies the equation
\begin{equation}\label{e19}
\Theta(\mu\otimes \omega)\,=\, 0
\end{equation}
for all $\omega\, \in\, H^0(X,\, K_X)$; recall that $\{\omega_j\}_{j=1}^3$ be a basis of $H^0(X,\, K_X)$.

We will show that there is no nonzero cohomology class $\mu\,\in\, H^1(X,\, TX)$
that satisfies \eqref{e19} for all $\omega\, \in\, H^0(X,\, K_X)$.

Using Serre duality, it
suffices to prove that the tensor product homomorphism
$$
\Theta'\, :\, H^0(X,\, K_X)\otimes H^0(X,\, K_X)\, \longrightarrow\,
H^0(X,\, K^2_X)
$$
is surjective. Now, Max Noether's theorem says that the homomorphism $\Theta'$ is surjective
because $X$ is not hyperelliptic \cite[p.~117]{ACGH}. Hence $\text{kernel}(\rho_*)\,=\, 0$.

The condition on a homomorphism $\rho\, :\, TX\, \longrightarrow\, \text{ad}(E_G)$ that
$$
\rho_*\, :\, H^1(X, \, TX) \, \longrightarrow\, H^1(X,\, {\rm ad}(E_G))
$$
is injective, is Zariski open (in the space of all holomorphic homomorphisms). This completes
the proof of the lemma.
\end{proof}

\section{Holomorphic connections on the trivial bundle}\label{section 5}

In this section we prove the main result of the article (Theorem \ref{thm2}) and deduce several 
consequences (Corollary \ref{cor2} and Corollary \ref{cor3}). Corollary \ref{cor2} is the main 
result in \cite{CDHL}. Corollary \ref{cor3} answers positively a question of B. Deroin.

As before, $G$ is a connected reductive affine algebraic group defined over $\mathbb C$.
In this section we further assume that $\dim G\, \geq\, 3$. As before, the Lie algebra of $G$
will be denoted by $\mathfrak g$.

As before, $X$ is a compact connected Riemann surface of genus $g$, with $g\, \geq\,2$. Given an
element
$$
\delta\, \in\, H^0(X,\, K_X)\otimes{\mathfrak g} \, ,
$$
we have an ${\mathcal O}_X$--linear homomorphism
\begin{equation}\label{e21}
M(\delta)\, :\, TX\, \longrightarrow\, {\mathcal O}_X\otimes_{\mathbb C} {\mathfrak g}
\end{equation}
that sends any $v\, \in\, T_xX$ to the contraction $\langle \delta(x),\, v\rangle\,
\in\, \mathfrak g$. Let
\begin{equation}\label{e22}
M(\delta)_*\, :\, H^1(X,\, TX)\, \longrightarrow\, H^1(X,\, {\mathcal O}_X\otimes_{\mathbb C}
{\mathfrak g})\,=\, H^1(X,\, {\mathcal O}_X)\otimes_{\mathbb C} {\mathfrak g}
\end{equation}
be the homomorphism of first cohomologies induced by the homomorphism $M(\delta)$ in \eqref{e21}. 

Notice that the homomorphism $M(\delta)$ in \eqref{e21} is similar to $\rho$ in \eqref{e13}.

\begin{lemma}\label{lem2}
Let $X$ be a compact connected Riemann surface of genus $g\, \geq\,2$
such that one of the following two holds:
\begin{enumerate}
\item $X$ is non-hyperelliptic;

\item $g\,=\, 2$.
\end{enumerate}
Then there is a nonempty Zariski open subset ${\mathcal U}\, \subset\, H^0(X,\, K_X)\otimes{\mathfrak g}$
such that for every $\delta\, \in\, {\mathcal U}$, the homomorphism $M(\delta)_*$ constructed in
\eqref{e22} is injective.
\end{lemma}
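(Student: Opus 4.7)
The plan is to translate the injectivity of $M(\delta)_*$ into a surjectivity question for the canonical multiplication map via Serre duality, and then to reduce to the surjectivity statements highlighted in the introduction. First I would fix a basis $\{e_1,\ldots,e_n\}$ of $\mathfrak{g}$ (with $n=\dim\mathfrak{g}\geq 3$) and write
$$
\delta \,=\, \sum_{i=1}^n e_i\otimes \omega_i, \qquad \omega_i\in H^0(X,\,K_X).
$$
Let $W\subseteq H^0(X,\,K_X)$ be the linear span of the $\omega_i$. Under the identification $H^1(X,\,{\mathcal O}_X\otimes_{\mathbb C}\mathfrak{g})=H^1(X,\,{\mathcal O}_X)\otimes_{\mathbb C}\mathfrak{g}$, the components of $M(\delta)_*$ relative to the basis dual to $\{e_i\}$ are the cup-product maps $\omega_{i,*}\,:\,H^1(X,\,TX)\to H^1(X,\,{\mathcal O}_X)$ induced by viewing each $\omega_i$ as a homomorphism $TX\to{\mathcal O}_X$. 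Hence $\ker M(\delta)_*$ consists of those $\mu\in H^1(X,\,TX)$ that are annihilated by cup product with every element of $W$.

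Next, Serre duality identifies $H^1(X,\,TX)^*=H^0(X,\,K_X^2)$ and $H^1(X,\,{\mathcal O}_X)^*=H^0(X,\,K_X)$, and under these identifications the transpose of $\omega_{i,*}$ is multiplication by $\omega_i$ from $H^0(X,\,K_X)$ into $H^0(X,\,K_X^2)$. Consequently, $M(\delta)_*$ is injective if and only if the multiplication map
$$
\Theta_W\,:\, W\otimes H^0(X,\,K_X)\,\longrightarrow\, H^0(X,\,K_X^2)
$$
is surjective. This is exactly the reformulation announced at the end of the introduction.

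Since the rank of a linear map is lower semi-continuous in its parameters, the locus of $\delta\in\mathfrak{g}\otimes H^0(X,\,K_X)$ for which $M(\delta)_*$ is injective is Zariski open. It therefore suffices to exhibit one $\delta$ in each case for which $\Theta_W$ is surjective. If $g=2$, I would use $\dim\mathfrak{g}\geq 3>2=\dim H^0(X,\,K_X)$ to choose $\omega_1,\omega_2$ spanning all of $H^0(X,\,K_X)$ and the remaining $\omega_j=0$; then $W=H^0(X,\,K_X)$ and the surjectivity of $\Theta_W$ is the statement established at the end of the proof of Proposition \ref{prop2}. If $X$ is non-hyperelliptic (so $g\geq 3$), I would invoke the Gieseker–Lazarsfeld surjectivity result cited in the introduction, which provides a three-dimensional subspace $W_0\subset H^0(X,\,K_X)$ for which $\Theta_{W_0}$ is surjective; since $\dim\mathfrak{g}\geq 3$, one can take $\omega_1,\omega_2,\omega_3$ to be a basis of such a $W_0$ (and the remaining $\omega_j=0$), so that $W=W_0$ and $\Theta_W$ is surjective.

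The essential input of the argument is the Lazarsfeld surjectivity theorem handling the non-hyperelliptic case, and this is quoted as a black box. The remainder is purely formal: the Serre-duality transposition that converts the kernel condition on $M(\delta)_*$ into the cokernel condition on $\Theta_W$, and the elementary observation that injectivity of a linear map depending linearly on parameters is Zariski open.
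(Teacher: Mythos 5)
Your proposal is correct and follows essentially the same route as the paper: Serre duality converts injectivity of $M(\delta)_*$ into surjectivity of the multiplication map $\Theta_W$, the non-hyperelliptic case is settled by the Gieseker--Lazarsfeld theorem on generic three-dimensional subspaces $W$ (using $\dim\mathfrak g\geq 3$ to realize such a $W$ inside the image of $\delta$), the genus-two case is settled by the surjectivity of $\Theta'$ established in the proof of Proposition \ref{prop2}, and Zariski openness of the injectivity condition finishes the argument. This matches the paper's proof in both structure and essential inputs.
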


\begin{proof}
First assume that $X$ is non-hyperelliptic. Under this assumption, Theorem 1.1 of 
\cite[p.~221]{Gi} says that for a generic three dimensional subspace $W\, \subset\, H^0(X,\, 
K_X)$, the natural homomorphism

$$
\Theta_W\, :\, H^0(X,\, K_X)\otimes W\, \longrightarrow\, H^0(X,\, K^2_X)
$$
is surjective; in \cite{Gi}, the proof of this theorem is attributed to R. Lazarsfeld (see the
sentence in \cite{Gi} just after Theorem 1.1). The dual homomorphism for it
$$
\Theta^*_W\, :\, H^1(X,\, TX) \, \longrightarrow\, H^1(X,\, {\mathcal O}_X)\otimes W^*\, ,
$$
obtained using Serre duality, is injective if $\Theta_W$ is surjective.

Take any $W$ as above such that $\Theta^*_W$ is injective. Set
$$
\delta\, \in\, H^0(X,\, K_X)\otimes{\mathfrak g}
$$
to be such that the image of the homomorphism ${\mathfrak g}^*\, \longrightarrow\,
H^0(X,\, K_X)$ corresponding to $\delta$ contains $W$; note that the given condition
that $\dim G\, \geq\, 3$ ensures that such a $\delta$ exists. Then from the injectivity of $\Theta^*_W$
it follows immediately that the homomorphism $M(\delta)_*$ constructed in
\eqref{e22} is injective.

Since the condition on $\delta\, \in\, H^0(X,\, K_X)\otimes{\mathfrak g}$ that $M(\delta)_*$
is injective is actually Zariski open (in $H^0(X,\, K_X)\otimes{\mathfrak g}$), the proof of the lemma is
complete under the assumption that $X$ is non-hyperelliptic.

Next assume that $g\,=\, 2$. This case is actually covered in the proof of Proposition \ref{prop2}. 
More precisely, the proof of Proposition \ref{prop2} shows that as long as $\delta$ is not of the form 
$\omega\otimes B$, where $B\, \in\, \mathfrak g$ and $\omega\, \in\, H^0(X,\, K_X)$ are
fixed elements, the homomorphism $M(\delta)_*$ is injective.
\end{proof}

\begin{corollary}\label{cor1}
Let $X$ be a compact connected Riemann surface of genus $g\, \geq\,2$ such that one
of the following two holds:
\begin{enumerate}
\item $X$ is non-hyperelliptic;

\item $g\,=\, 2$.
\end{enumerate}
Then for the generic holomorphic connection $\phi$ on the holomorphically trivial principal
$G$--bundle $E_G\, =\, X\times G\, \longrightarrow\, X$,
$$
{\mathbb H}^1(X,\, {\mathcal C}_\bullet) \, \supset\, {\mathcal S}(X,\, \phi)\cap \beta_\phi (H^1(X,\, TX))
\,=\, 0\, ,
$$
where ${\mathcal S}(X,\, \phi)$ is the subspace constructed in \eqref{e12} and $\beta_\phi$
is the homomorphism in \eqref{e10}.
\end{corollary}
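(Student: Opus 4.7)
The plan is to combine Proposition \ref{prop1} and Lemma \ref{lem2} through the canonical identification between holomorphic connections on the trivial principal bundle $E_G = X \times G$ and elements of $\mathfrak{g} \otimes H^0(X, K_X)$. The argument reduces to bookkeeping; there is no hard step, only a dictionary to set up.

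First I would make the identification explicit. Since $E_G$ is holomorphically trivial, the adjoint bundle is canonically $\text{ad}(E_G) = \mathcal{O}_X \otimes_{\mathbb{C}} \mathfrak{g}$. Under the decomposition \eqref{f2} induced by the trivial connection, every holomorphic connection $\phi$ on $E_G$ arises from a unique $\rho : TX \to \text{ad}(E_G)$ via the formula \eqref{f3}, and by adjunction such a $\rho$ corresponds to a unique element
\[
\delta \in H^0(X, K_X \otimes_{\mathbb{C}} \mathfrak{g}) = \mathfrak{g} \otimes H^0(X, K_X).
\]
Comparing this with the definition of $M(\delta)$ in \eqref{e21}, which also sends $v \in T_xX$ to the contraction $\langle \delta(x), v \rangle \in \mathfrak{g}$, one sees immediately that $\rho$ coincides with $M(\delta)$ under the identification $\text{ad}(E_G) = \mathcal{O}_X \otimes \mathfrak{g}$. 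In particular, $\rho_* = M(\delta)_*$ as maps $H^1(X, TX) \to H^1(X, \text{ad}(E_G)) = H^1(X, \mathcal{O}_X) \otimes \mathfrak{g}$.

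Next I would invoke Lemma \ref{lem2} (which applies since $X$ is either non-hyperelliptic or of genus two, and $\dim G \geq 3$) to obtain a nonempty Zariski open subset $\mathcal{U} \subset \mathfrak{g} \otimes H^0(X, K_X)$ such that $M(\delta)_*$ is injective for every $\delta \in \mathcal{U}$. Via the bijection in the previous step, $\mathcal{U}$ transports to a nonempty Zariski open subset of the affine space of holomorphic connections on $E_G$ for which the hypothesis of Proposition \ref{prop1} holds.

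Finally I would apply Proposition \ref{prop1}: for every such $\phi$ the injectivity of $\rho_* = M(\delta)_*$ yields ${\mathcal S}(X, \phi) \cap \beta(H^1(X, TX)) = 0$, which is the desired conclusion. The only thing worth being careful about is the compatibility $\rho = M(\delta)$; the substantive geometric content already sits inside Lemma \ref{lem2} (ultimately relying on Noether's theorem and the Gieseker--Lazarsfeld surjectivity of $\Theta_W$) and inside Proposition \ref{prop1}.
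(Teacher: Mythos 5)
Your proposal is correct and follows exactly the paper's route: the paper's proof of this corollary is the one-line statement that it follows by combining Proposition \ref{prop1} and Lemma \ref{lem2}, and your explicit dictionary identifying $\rho$ with $M(\delta)$ under the trivialization is precisely the (implicit) bookkeeping that makes that combination work.
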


\begin{proof}
This follows from the combination of Proposition \ref{prop1} and Lemma \ref{lem2}.
\end{proof}

For any holomorphic connection $\phi$ on a holomorphic principal $G$--bundle $E_G$ (it need not
be trivial) on $X$, consider the 
monodromy representation for $\phi$. Let ${\mathbb L}(\phi)$ be the $\mathbb C$--local system on $X$ 
for the flat connection on $\text{ad}(E_G)$ induced by $\phi$.
The infinitesimal deformations of the monodromy representation are parametrized
by the elements of $H^1(X,\, {\mathbb L}(\phi))$ \cite{Go}.
The differential of the monodromy map is a homomorphism
\begin{equation}\label{e23}
{\mathcal H}(\phi)\, :\, {\mathbb H}^1(X,\, {\mathcal C}_\bullet)
\, \longrightarrow\, H^1(X,\, {\mathbb L}(\phi))
\end{equation}
(see Theorem \ref{thm1}(2)).

\begin{theorem}\label{thm2}
Let $X$ be a compact connected Riemann surface of genus $g\, \geq\,2$ such that
one of the following two holds:
\begin{enumerate}
\item $X$ is non-hyperelliptic;

\item $g\,=\, 2$.
\end{enumerate}
Then for the generic holomorphic connection $\phi$ on the holomorphically trivial principal
$G$--bundle $E_G\, =\, X\times G\, \longrightarrow\, X$, the restriction of the homomorphism
${\mathcal H}(\phi)$ in \eqref{e23} to the subspace ${\mathcal S}(X,\, \phi)\, \subset\,
{\mathbb H}^1(X,\, {\mathcal C}_\bullet)$ in \eqref{e12} is injective.
\end{theorem}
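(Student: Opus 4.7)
The plan is to deduce Theorem \ref{thm2} from Corollary \ref{cor1} once the kernel of the differential $\mathcal{H}(\phi)$ of the monodromy map is properly identified. The key first step is to establish
\[
\ker\bigl(\mathcal{H}(\phi)\bigr) \;=\; \beta\bigl(H^1(X, TX)\bigr).
\]
The inclusion $\beta(H^1(X, TX)) \subseteq \ker(\mathcal{H}(\phi))$ is tautological: by Theorem \ref{thm1}(4), $\beta$ is the infinitesimal isomonodromy map, so its image consists of deformations along which the monodromy representation is preserved, and on those the differential of $\mathrm{Mon}$ vanishes. For the reverse inclusion, one uses that on a fixed Riemann surface $X$ the classical Riemann--Hilbert correspondence is a local biholomorphism between the moduli of holomorphic connections on $E_G$ (modulo gauge) and the character variety, so no nonzero deformation keeping $X$ fixed can lie in $\ker(\mathcal{H}(\phi))$. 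Combined with the splitting of $\mathbb{H}^1(X, \mathcal{C}_\bullet)$ furnished by the section property $q_* \circ \gamma \circ \beta = \mathrm{Id}$ in \eqref{e14} (which in particular makes $\beta$ injective), this pins down the kernel.

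With this identification in hand, the injectivity of $\mathcal{H}(\phi)|_{\mathcal{S}(X, \phi)}$ is equivalent to the vanishing
\[
\mathcal{S}(X, \phi) \cap \beta\bigl(H^1(X, TX)\bigr) \;=\; 0,
\]
which, under either of the two hypotheses on $X$, is precisely the content of Corollary \ref{cor1}. Since Corollary \ref{cor1} asserts this vanishing for $\phi$ ranging over a nonempty Zariski open subset of the space of holomorphic connections on the trivial $G$-bundle, it applies to the generic $\phi$ in the sense of Theorem \ref{thm2}, and the conclusion follows.

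The substantive technical input has been absorbed into the earlier development: the main obstacle underlying this chain of reasoning is Lemma \ref{lem2}, whose non-hyperelliptic case rests on Lazarsfeld's theorem (Theorem 1.1 of \cite{Gi}) on the surjectivity of the multiplication map $H^0(X, K_X) \otimes W \to H^0(X, K_X^2)$ for a generic three-dimensional subspace $W$, and whose $g = 2$ case follows from the short exact sequence computation on $X \times X$ already used in the proof of Proposition \ref{prop2}. Once Lemma \ref{lem2}, and hence Corollary \ref{cor1}, is in place, the deduction of Theorem \ref{thm2} is formal modulo the identification of $\ker(\mathcal{H}(\phi))$ outlined above.
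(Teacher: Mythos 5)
Your proof is correct and takes essentially the same route as the paper: identify $\ker(\mathcal{H}(\phi))$ with $\beta(H^1(X,\,TX))$ via Theorem \ref{thm1}(4) and then invoke Corollary \ref{cor1}. The only difference is that you explicitly justify the reverse inclusion $\ker(\mathcal{H}(\phi))\,\subseteq\, \beta(H^1(X,\,TX))$ (via the splitting from \eqref{e14} and the classical Riemann--Hilbert correspondence on a fixed curve), a point the paper leaves implicit in its citation of Theorem \ref{thm1}(4).
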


\begin{proof}
{}From Theorem \ref{thm1}(4) we know that the kernel of the homomorphism ${\mathcal H}(\phi)$
is the image of the homomorphism $\beta_\phi$ in \eqref{e10}. In view of this, the theorem is an immediate
consequence of Corollary \ref{cor1}.
\end{proof}

\begin{remark}\label{remla}
The assumption in Theorem \ref{thm principal} that $\dim_{\mathbb C} G \, \geq\, 3$ is essential. Otherwise (i.e.,
if $\dim_{\mathbb C} G \, \leq\, 2$), 
the map ${\rm Mon}$ in Theorem \ref{thm principal} fails to be an immersion for dimensional reasons.
To illustrate this, set $G$ to be the two dimensional affine algebraic torus
${\mathbb C}^*\times {\mathbb C}^*$. Then
$$
\dim_{\mathbb C} \Xi^{\rm irred}\,=\, 4g\, ,
$$
and from Lemma \ref{lemsmm} we know that
$$
\dim_{\mathbb C} {\rm Syst}^{\rm irred}\,=\, 5g-3\, .
$$
Therefore, in this case, the map
${\rm Mon}$ in Theorem \ref{thm principal} is nowhere an immersion.
\end{remark}

\begin{remark}\label{rem-re2}
The map ${\mathcal H}(\phi)$ in \eqref{e23} is a homomorphism of tangent spaces.
So the generic injectivity statement in Theorem \ref{thm2} is unrelated to whether
the points of the moduli spaces, to which the tangent spaces are associated, are
genuinely orbifold points or not. On the other hand, the injectivity statement
in Theorem \ref{thm2} is only for the generic point. Therefore, Theorem \ref{thm2}
does not say whether the injectivity statement holds at an orbifold point or not.
\end{remark}

The following result was first proved in \cite{CDHL}:

\begin{corollary}\label{cor2}
Let $X$ be a compact connected Riemann surface of genus two. Set 
$G\,=\, {\rm SL}(2,{\mathbb C})$. Let $\phi$ be an irreducible
holomorphic connection on the trivial holomorphic principal $G$--bundle
$E_G\,=\, X\times G\, \longrightarrow\, X$. Then the
restriction of the homomorphism
${\mathcal H}(\phi)$ in \eqref{e23} to the subspace ${\mathcal S}(X,\, \phi)$ in \eqref{e12}
is an isomorphism between ${\mathcal S} (X, \, \phi)$ and $H^1(X, \, {\mathbb L}(\phi))$.
\end{corollary}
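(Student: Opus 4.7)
The plan is to combine the pointwise injectivity furnished by Proposition \ref{prop2} with a dimension count, so that the corollary drops out almost immediately from material already in hand.

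First I would establish injectivity of $\mathcal{H}(\phi)|_{\mathcal{S}(X,\phi)}$. Proposition \ref{prop2} asserts that for $g=2$ and $G = \mathrm{SL}(2,\mathbb{C})$ \emph{every} irreducible $\phi$ on the trivial bundle has injective $\rho_\ast$, so the hypothesis of Proposition \ref{prop1} is satisfied. Proposition \ref{prop1} then gives $\mathcal{S}(X,\phi) \cap \beta(H^1(X,TX)) = 0$, and since Theorem \ref{thm1}(4) identifies $\beta(H^1(X,TX))$ with the kernel of $\mathcal{H}(\phi)$, the injectivity follows. Note that this already refines Theorem \ref{thm2} in the $g=2$ case: injectivity is obtained at every irreducible $\phi$, not merely generically.

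Next I would verify that source and target have the same complex dimension. With $d = \dim [G,G] = 3$ and $c = 0$, the dimension formula for $\mathrm{Syst}$ recorded in the introduction gives $\dim_{\mathbb C}\mathrm{Syst}^{\mathrm{irred}} = (g-1)(d+3) + gc = 6$. Since $\mathrm{Syst}^{\mathrm{irred}}$ is smooth and $\mathcal{S}(X,\phi)$ is its tangent space at $(X,\phi)$ embedded in ${\mathbb H}^1(X,\mathcal{C}_\bullet)$, one gets $\dim_{\mathbb C}\mathcal{S}(X,\phi) = 6$. On the target side, irreducibility of $\phi$ rules out nonzero global flat sections of $\mathrm{ad}(E_G)$, so $H^0(X, {\mathbb L}(\phi)) = 0$; combining Poincar\'e duality for $\mathbb C$-local systems on $X$ with the Killing-form self-duality of ${\mathbb L}(\phi)$ then yields $H^2(X, {\mathbb L}(\phi)) = 0$. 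The Euler characteristic identity
$$
\chi(X, {\mathbb L}(\phi)) \,=\, (2-2g)\dim \mathfrak{g} \,=\, -6
$$
therefore forces $\dim H^1(X, {\mathbb L}(\phi)) = 6$.

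An injective $\mathbb C$-linear map between complex vector spaces of equal finite dimension is an isomorphism, which concludes the proof. The whole argument is essentially bookkeeping; the one place that genuinely requires care, and hence the main (modest) obstacle, is confirming that $\dim \mathcal{S}(X,\phi) = \dim \mathrm{Syst}^{\mathrm{irred}}$. This in turn relies on the natural map $\mathcal{T}(X,\phi) \to {\mathbb H}^1(X,\mathcal{C}_\bullet)$ of Section \ref{section 3} being injective at the smooth point $(X,\phi)$, which ultimately follows from the deformation-theoretic interpretation of ${\mathbb H}^1(X,\mathcal{C}_\bullet)$ in Theorem \ref{thm1}(2) together with the smoothness of $\mathrm{Syst}^{\mathrm{irred}}$.
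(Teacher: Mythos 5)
Your proposal is correct and follows essentially the same route as the paper: injectivity of ${\mathcal H}(\phi)|_{{\mathcal S}(X,\phi)}$ from Proposition \ref{prop2} combined with Proposition \ref{prop1} and Theorem \ref{thm1}(4), followed by the equality $\dim {\mathcal S}(X,\phi)\,=\,6\,=\,\dim H^1(X,\,{\mathbb L}(\phi))$. The only difference is that you spell out the dimension count (via the formula $(g-1)(d+3)+gc$ on one side and the Euler characteristic of ${\mathbb L}(\phi)$ with vanishing $H^0$ and $H^2$ on the other), which the paper simply asserts.
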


\begin{proof}
Since $\text{kernel}({\mathcal H}(\phi))\,=\, \beta_\phi (H^1(X,\, TX))$ (see Theorem \ref{thm1}(4)), it follows from the
combination of Proposition \ref{prop2} and Proposition \ref{prop1} that the restriction of
${\mathcal H}(\phi)$ to ${\mathcal S}(X,\, \phi)$ is injective. Since the complex dimensions agree:
$$
\dim {\mathcal S}(X,\, \phi)\,=\, 6\,=\, \dim H^1(X,\, {\mathbb L}(\phi))\, ,
$$
injectivity implies isomorphism.
\end{proof}

\begin{remark}\label{rem-re3}
Regarding Remark \ref{rem-re2}, we note that Corollary \ref{cor2} holds for every
point $(X,\, \phi)\, \in\, {\rm Syst}^{\rm irred}$ (under the assumptions that
$g\,=\, 2$ and $G\,=\, {\rm SL}(2,{\mathbb C})$). In particular, Corollary \ref{cor2}
holds for the orbifold points of ${\rm Syst}^{\rm irred}$.
\end{remark} 

For Riemann surfaces of higher genus we have the following:

\begin{corollary} \label{cor3}
Let $X$ be a compact connected Riemann surface of genus $g\, \geq\,3$ which is not hyperelliptic. Set 
$G\,=\, {\rm SL}(2,{\mathbb C})$. 

Then for the generic holomorphic connection $\phi$ on the holomorphically trivial principal
$G$--bundle $E_G\, =\, X\times G\, \longrightarrow\, X$, the restriction of the homomorphism
${\mathcal H}(\phi)$ in \eqref{e23} to the subspace ${\mathcal S}(X,\, \phi)\, \subset\,
{\mathbb H}^1(X,\, {\mathcal C}_\bullet)$ is an isomorphism
between ${\mathcal S} (X, \, \phi)$ and $H^1(X, \, {\mathbb L}(\phi))$.
\end{corollary}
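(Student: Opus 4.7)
The plan is to follow exactly the strategy used for Corollary \ref{cor2}, with Theorem \ref{thm2} supplying the injectivity step in place of the combination of Propositions \ref{prop2} and \ref{prop1}, and with an analogous dimension count to upgrade injectivity to isomorphism.

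First I would invoke Theorem \ref{thm2}. Its hypotheses are satisfied: $X$ is non-hyperelliptic by assumption, and $\dim G = 3$ for $G = \mathrm{SL}(2,\mathbb{C})$, so the standing assumption $\dim G \geq 3$ of Section \ref{section 5} holds. Hence for the generic holomorphic connection $\phi$ on $E_G = X \times G$, the restriction of $\mathcal{H}(\phi)$ to the subspace $\mathcal{S}(X,\phi) \subset \mathbb{H}^1(X,\mathcal{C}_\bullet)$ is injective.

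Next I would match dimensions. For $G = \mathrm{SL}(2,\mathbb{C})$ one has $d = \dim[G,G] = 3$ and $c = \dim G - d = 0$, giving $\dim \Xi = 2(g-1)d + 2gc = 6(g-1)$ and $\dim \mathrm{Syst} = (g-1)(d+3) + gc = 6(g-1)$. For a generic (hence in particular irreducible) $\phi$, this yields $\dim H^1(X,\mathbb{L}(\phi)) = 6(g-1)$ on the target side, and $\dim \mathcal{S}(X,\phi) = 6(g-1)$ on the source side, the latter using the interpretation of $\mathbb{H}^1(X,\mathcal{C}_\bullet)$ from Theorem \ref{thm1}(2) to identify $\mathcal{S}(X,\phi)$ with the tangent space of $\mathrm{Syst}^{\mathrm{irred}}$ at $(X,\phi)$. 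Since an injective linear map between vector spaces of equal finite dimension is an isomorphism, the corollary follows.

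The only step that is not quite mechanical is confirming the equality $\dim \mathcal{S}(X,\phi) = 6(g-1)$ at the generic $\phi$: one needs the generic $\phi$ produced by Theorem \ref{thm2} to also lie in $\mathrm{Syst}^{\mathrm{irred}}$ and to be a smooth point there, so that the natural deformation map $\mathcal{T}(X,\phi) \to \mathbb{H}^1(X,\mathcal{C}_\bullet)$ has image of the expected full dimension. Both irreducibility and smoothness of $\mathrm{Syst}^{\mathrm{irred}}$ at the generic $\phi$ are Zariski-open conditions (cf.\ the discussion in Section \ref{sec0} and the identical dimension count used at the end of the proof of Corollary \ref{cor2}), so no genuinely new difficulty arises beyond what Theorem \ref{thm2} already provides.
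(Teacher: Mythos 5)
Your proposal is correct and follows essentially the same route as the paper: invoke Theorem \ref{thm2} for injectivity of ${\mathcal H}(\phi)\vert_{{\mathcal S}(X,\phi)}$ at the generic $\phi$, then observe that $\dim {\mathcal S}(X,\phi) = 6g-6 = \dim H^1(X,{\mathbb L}(\phi))$ so that injectivity forces an isomorphism. Your extra remark that genericity must be compatible with irreducibility and smoothness (both Zariski-open conditions) is a point the paper leaves implicit, but it does not change the argument.
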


\begin{proof}
This follows directly from the injectivity statement obtained in Theorem \ref{thm2}. Indeed, here
the complex dimensions agree:
$$
\dim {\mathcal S}(X,\, \phi)\,=\, 6g-6 \,=\, \dim H^1(X,\, {\mathbb L}(\phi))\, .
$$
On the other hand, Theorem \ref{thm2} says that for the generic holomorphic connection $\phi$ on the
holomorphically trivial principal $G$--bundle $E_G\, =\, X\times G\, \longrightarrow\, X$,
the restriction of the homomorphism
${\mathcal H}(\phi)$ in \eqref{e23} to the subspace ${\mathcal S}(X,\, \phi)\, \subset\,
{\mathbb H}^1(X,\, {\mathcal C}_\bullet)$ is injective. Hence this restriction of the homomorphism
${\mathcal H}(\phi)$ is an isomorphism
between ${\mathcal S} (X, \, \phi)$ and $H^1(X, \, {\mathbb L}(\phi))$.
\end{proof}

In \cite{CDHL} the following question was asked: Is there a compact Riemann surface $X$, 
and a holomorphic connection $D$ on ${\mathcal O}^{\oplus 2}_X$, such that $D$ is 
irreducible and the image of the monodromy homomorphism for $D$ is contained in ${\rm 
SL}(2, {\mathbb R})$ \cite[p.~161]{CDHL}? Such pairs $(X,\, D)$ were constructed in 
\cite{BDH}.

\section*{Acknowledgement}

We thank the referee for helpful comments.
We are very grateful to Bertrand Deroin for formulating for us the question addressed here in 
Corollary \ref{cor3} and for interesting discussions on the subject. We are also very grateful to 
Mihnea Popa for pointing out \cite{Gi}. We thank P. Pandit for pointing out \cite{Ch1}.

This work has been supported by the French government through the UCAJEDI Investments in the 
Future project managed by the National Research Agency (ANR) with the reference number 
ANR2152IDEX201. The first-named author is partially supported by a J. C. Bose Fellowship, and 
school of mathematics, TIFR, is supported by 12-R$\&$D-TFR-5.01-0500. 


\end{document}